\documentclass{article}
\usepackage{amsmath, amssymb, amsthm, enumitem}

\newtheorem{thm}{Theorem}[section]
\newtheorem{cor}[thm]{Corollary}
\newtheorem{prop}[thm]{Proposition}
\newtheorem{lem}[thm]{Lemma}

\theoremstyle{definition}
\newtheorem{defn}[thm]{Definition}
\newtheorem{exmp}[thm]{Example}

\title{Triangle-free quantum graphs}
\author{Nik Weaver}
\date{September 2025}

\begin{document}

\maketitle

\begin{abstract}
We introduce notions of being ``triangle-free'' and ``strongly triangle-free'' for operator systems in $M_n(\mathbb{C})$ considered as quantum graphs. Several examples and non-examples are discussed. We provide a complete characterization of strongly triangle-free operator systems.
\end{abstract}

\section{Introduction}

An {\it operator system} in finite dimensions is a linear subspace $\mathcal{V}$ of the complex matrix algebra $M_n = M_n(\mathbb{C})$ which contains the identity matrix ($I_n \in \mathcal{V}$) and is stable under Hermitian transpose ($A \in \mathcal{V} \to A^* \in \mathcal{V}$). In \cite{DSW}, motivated by ideas from quantum error correction, these objects were identified as a quantum analog of finite simple graphs. A crude intuition, not to be taken too seriously, could be that the operator system $\mathcal{V}$ represents the edge structure and the unit vectors in $\mathbb{C}^n$ play the role of vertices, with two unit vectors $v,w \in \mathbb{C}^n$ being ``adjacent'' if $\langle Av,w\rangle \neq 0$ for some $A \in \mathcal{V}$.

Operator systems in $M_n$ turn out to be a special case of a more general notion of a ``quantum graph'' over an arbitrary von Neumann algebra $\mathcal{M}$ \cite{daws, MRV, weavera, weaverb}, in which the $\mathcal{M} = l^\infty(X)$ case effectively replicates classical graphs with vertex set $X$ and $\mathcal{M} = M_n$ reduces to the finite state, purely quantum setting of \cite{DSW}.

Despite substantial recent interest in general quantum graphs, the elementary setting of \cite{DSW} --- the purely quantum analog of finite simple graphs --- remains relatively unexplored. We do have good notions of ``clique'' and ``anticlique'', and, for instance, there is a quantum analog of Ramsey's theorem which states that if $n$ is sufficiently large then every operator system in $M_n$ has either a $k$-clique or a $k$-anticlique \cite{weaverx}. We also know what it means for an operator system to be ``connected'': there should be no nontrivial projections in its commutant \cite{CDS}. But I am not even sure what the right notion of a ``quantum tree'' would be. One possibility is ``minimal connected operator system'', but is this a fruitful definition?

At any rate, it seems that there is still an opportunity to identify quantum analogs of standard notions from classical graph theory in the elementary matrix algebra setting. In this paper I will investigate quantum analogs of triangle-free graphs. Here are the basic definitions.

\begin{defn}
A {\it $k$-anticlique} for an operator system $\mathcal{V} \subseteq M_n$ is a rank $k$ projection (= Hermitian idempotent) $P \in M_n$ which satisfies $P\mathcal{V}P = \mathbb{C}\cdot P$, i.e., for which the compression of every operator in $\mathcal{V}$ to ${\rm ran}(P)$ is a scalar multiple of the identity operator on ${\rm ran}(P)$. Equivalently, ${\rm dim}(P\mathcal{V}P) = 1$. A {\it $k$-clique} is a rank $k$ projection $P$ which satisfies $P\mathcal{V}P \cong M_k$, i.e., for which every linear operator on ${\rm ran}(P)$ is the compression of something in $\mathcal{V}$. Equivalently, ${\rm dim}(P\mathcal{V}P) = k^2$.
\end{defn}

Classically, the notion of a ``triangle-free'' graph can be formulated in two trivially equivalent ways: (1) there is no triple of vertices among which all edges are present, or (2) among any triple of vertices at least one edge is absent. That is, we can say either that there is no 3-clique or that the induced subgraph on every triple of vertices contains a 2-anticlique. This leads to two distinct versions of an operator system being ``triangle-free''.

\begin{defn}
An operator system is {\it triangle-free} if it has no 3-clique. It is {\it strongly triangle-free} if every rank 3 projection dominates a 2-anticlique.
\end{defn}

The first version is already very strong. Generally speaking, it is easy to find cliques and hard to find anticliques in operator systems \cite{weavery}. (There is no simple duality between cliques and anticliques as there is in the classical setting.) So operator systems that do not have even a single 3-clique should be rare. However, they do exist; see Section 2 below.

The second version is even stronger: if every rank 3 projection dominates a 2-anticlique then there certainly cannot be any 3-cliques. This is because any 2-anticlique $P$ for $\mathcal{V}$ that is dominated by a rank 3 projection $Q$ (i.e., $P \leq Q$) will also be a 2-anticlique for the compression $Q\mathcal{V}Q$. So this compression cannot be isomorphic to $M_3$, because the latter obviously has no 2-anticliques. In fact, we can strengthen this statement a little; see Proposition \ref{plustwo} below.

We could also define an operator system $\mathcal{V}$ to be {\it edge-free} if it has no 2-cliques, and {\it strongly edge-free} if every rank $2$ projection is a 2-anticlique. But in fact these properties are simply equivalent to the conditions ${\rm dim}(\mathcal{V}) \leq 3$ and ${\rm dim}(\mathcal{V}) = 1$. The first is a consequence of Theorem 3.3 of \cite{weaverx} (plus the trivial fact that $\mathcal{V}$ cannot have a 2-clique if its dimension is at most 3), and the second is a restatement of the easy fact that if an operator $A \in M_n$ is not a scalar multiple of the identity then there exists a rank 2 projection $P \in M_n$ such that $PAP$ is not a scalar multiple of $P$.

In contrast to this result, there exist strongly triangle-free operator systems (and therefore also triangle-free operator systems) of arbitrarily large dimension; see Proposition \ref{1ntf} below.

\section{Examples}

\subsection{Small operator systems}

Let $(e_i)$ be the standard orthonormal basis of $\mathbb{C}^n$. We recall the following simple fact.

\begin{prop}\label{biganti}
(\cite{weavery}, Proposition 2.7)
Let $A \in M_n$ be Hermitian. Then ${\rm span}(I_n,A)$ has an $\lceil \frac{n}{2}\rceil$-anticlique.
\end{prop}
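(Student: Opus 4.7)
The plan is to construct an orthonormal family of $k = \lceil n/2\rceil$ vectors $v_1, \dots, v_k$ such that $\langle Av_i, v_j\rangle = \lambda\delta_{ij}$ for some single scalar $\lambda$. The projection $P$ onto their span then satisfies $PAP = \lambda P$ and $PI_nP = P$, so $P\,{\rm span}(I_n,A)\,P = \mathbb{C}\cdot P$, exhibiting $P$ as a $k$-anticlique.

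To build the $v_i$, diagonalize $A$ in an orthonormal basis $e_1, \dots, e_n$ with eigenvalues $\lambda_1 \leq \cdots \leq \lambda_n$, and pair eigenvectors from opposite ends of the spectrum: $(e_i, e_{n+1-i})$ for $i = 1, \dots, \lfloor n/2\rfloor$. In each such pair, a unit vector of the form $v_i = \alpha_i e_i + \beta_i e_{n+1-i}$ satisfies $\langle Av_i, v_i\rangle = |\alpha_i|^2\lambda_i + |\beta_i|^2\lambda_{n+1-i}$, which ranges continuously over the interval $[\lambda_i, \lambda_{n+1-i}]$ as $(\alpha_i,\beta_i)$ varies over unit vectors. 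These intervals are nested in $i$, with the innermost being $[\lambda_{\lfloor n/2\rfloor}, \lambda_{\lceil n/2\rceil+1}]$, so their intersection is nonempty; pick any target $\lambda$ there and solve for each $v_i$. When $n$ is odd, additionally set $\lambda = \lambda_{(n+1)/2}$ (which lies in every pair interval since the eigenvalues are sorted) and include the unpaired middle eigenvector $e_{(n+1)/2}$ as an extra $v_i$.

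Orthonormality is immediate: each $v_i$ is a unit vector, and distinct $v_i$'s live in pairwise orthogonal subspaces (the different pair spans, plus the middle direction when $n$ is odd). The off-diagonal conditions $\langle Av_i, v_j\rangle = 0$ for $i \neq j$ follow because $A$ is diagonal in the basis $(e_j)$ and the supports of $v_i$ and $v_j$ involve disjoint basis vectors; the diagonal conditions hold by construction.

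I do not expect any substantial obstacle here. The only point requiring a small amount of care is the odd case, where the pairing leaves the middle eigenvector unused; tuning $\lambda$ to equal its eigenvalue allows this vector to be included as is, pushing the anticlique dimension from $\lfloor n/2\rfloor$ up to $\lceil n/2\rceil$.
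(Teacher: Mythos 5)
Your proposal is correct and is essentially the paper's own argument: diagonalize $A$, pair eigenvectors from opposite ends of the sorted spectrum, and choose the common target $\lambda$ to be the middle eigenvalue (odd $n$) or any value between the two middle eigenvalues (even $n$), which lies in every pair's attainable interval by nestedness. No gaps.
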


The proof is easy. If $n = 2m + 1$ is odd, without loss of generality assume $A$ is diagonal with decreasing eigenvalues $\lambda_1 \geq \cdots \geq \lambda_n$, let $\lambda = \lambda_{m+1}$ be the middle eigenvalue, and for each $1 \leq i \leq m$ find a convex combination $t_i \lambda_i + (1 - t_i) \lambda_{n + 1 - i} = \lambda$. Then the vectors $e_{m+1}$ and $\sqrt{t_i}e_i + \sqrt{1 - t_i}e_{n+1-i}$ for $1 \leq i \leq m$ together constitute an orthonormal basis of an $(m+1)$-dimensional subspace of $\mathbb{C}^n$ which is the range of an anticlique (\cite{weavery}, Lemma 2.3). If $n$ is even, any $\lambda$ lying between the two middle eigenvalues can be used, but it no longer contributes a separate eigenvector to the anticlique.

Operator systems of sufficiently low dimension are automatically triangle-free or strongly triangle-free.

\begin{prop}\label{lowdim}
Let $\mathcal{V}$ be an operator system. If ${\rm dim}(\mathcal{V}) \leq 8$ then $\mathcal{V}$ is triangle-free. If ${\rm dim}(\mathcal{V}) \leq 2$ then $\mathcal{V}$ is strongly triangle-free.
\end{prop}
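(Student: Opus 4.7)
The two assertions can be handled in turn, and both are ultimately dimension-counting arguments leveraging material already in the excerpt.

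\textbf{Triangle-free when $\dim(\mathcal{V}) \leq 8$.} The plan is to note that for any projection $P$, the compression map $A \mapsto PAP$ is linear from $\mathcal{V}$ onto $P\mathcal{V}P$, so $\dim(P\mathcal{V}P) \leq \dim(\mathcal{V})$. By the definition of a $k$-clique, a $3$-clique requires $\dim(P\mathcal{V}P) = 9$, which is incompatible with $\dim(\mathcal{V}) \leq 8$. Thus no rank $3$ projection can be a $3$-clique.

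\textbf{Strongly triangle-free when $\dim(\mathcal{V}) \leq 2$.} If $\dim(\mathcal{V}) = 1$ then $\mathcal{V} = \mathbb{C}\cdot I_n$ and every projection of rank at least $2$ is already an anticlique, so the statement is trivial. If $\dim(\mathcal{V}) = 2$, I would first use that $\mathcal{V}$ is self-adjoint and contains $I_n$ to write $\mathcal{V} = \mathrm{span}(I_n, A)$ for some Hermitian $A$ (pick any $B \in \mathcal{V} \setminus \mathbb{C}I_n$; one of $B+B^*$ or $i(B-B^*)$ lies outside $\mathbb{C}I_n$). Now fix a rank $3$ projection $Q$ and identify $\mathrm{ran}(Q)$ with $\mathbb{C}^3$. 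The compression $QAQ$, viewed as a Hermitian element of $M_3 \cong B(\mathrm{ran}(Q))$, allows us to apply Proposition \ref{biganti} with $n = 3$ to the operator system $\mathrm{span}(Q|_{\mathrm{ran}(Q)}, QAQ|_{\mathrm{ran}(Q)})$, which produces a rank $\lceil 3/2 \rceil = 2$ anticlique, i.e.\ a rank $2$ projection $P \leq Q$ such that $P(QAQ)P \in \mathbb{C}\cdot P$. Since $P \leq Q$ we have $PQ = QP = P$, so $P(QAQ)P = PAP$, and of course $PI_nP = P$. Hence $P\mathcal{V}P = \mathbb{C}\cdot P$, so $P$ is a $2$-anticlique for $\mathcal{V}$ dominated by $Q$.

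\textbf{Anticipated difficulty.} There is no real obstacle: the first assertion is immediate from the linearity of compression, and the second is essentially a direct invocation of Proposition \ref{biganti}. The only minor care required is (i) the initial reduction to a Hermitian generator in the $\dim(\mathcal{V}) = 2$ case, and (ii) verifying that an anticlique for the compressed system $Q\mathcal{V}Q$ on $\mathrm{ran}(Q)$ lifts to a genuine anticlique for $\mathcal{V}$ on $\mathbb{C}^n$, which follows at once from $P \leq Q$.
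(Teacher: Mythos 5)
Your proposal is correct and follows essentially the same route as the paper: the first claim is the same linearity/dimension count ruling out $\dim(P\mathcal{V}P)=9$, and the second is the same reduction to $\mathrm{span}(I_n,A)$ with $A$ Hermitian followed by an application of Proposition \ref{biganti} with $n=3$ to the compression. The extra details you supply (existence of a Hermitian generator, and that $P \leq Q$ makes the compressed anticlique a genuine anticlique for $\mathcal{V}$) are exactly the points the paper leaves implicit.
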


\begin{proof}
It is clear that if ${\rm dim}(\mathcal{V}) \leq 8$ then we cannot have ${\rm dim}(P\mathcal{V}P) = 9$ for any rank 3 projection $P$. (The map $A \mapsto PAP$ is linear.) Thus $\mathcal{V}$ must be triangle-free. If ${\rm dim}(\mathcal{V}) \leq 2$ then $\mathcal{V}$ is the span of the identity and a single Hermitian matrix, and so the same is true of its compression to any three-dimensional subspace. Putting $n = 3$ in Proposition \ref{biganti} then yields that any such compression has a 2-anticlique. So $\mathcal{V}$ is strongly triangle-free.
\end{proof}

The interesting cases are therefore ${\rm dim}(\mathcal{V}) \geq 9$ (for triangle-free) and ${\rm dim}(\mathcal{V}) \geq 3$ (for strongly triangle-free). The latter will be completely characterized in Theorem \ref{maintheorem} below. As to the former, the following examples of nine-dimensional triangle-free operator systems in $M_4$ suggest that there will not be any simple characterization.

\begin{exmp}\label{m2ds}
Let $M_2 \oplus M_2$ be the operator system in $M_4$ consisting of all block diagonal matrices of the form $\left[\begin{matrix} A&0\cr 0&B\end{matrix}\right]$ with $A,B \in M_2$. This is an eight-dimensional operator system, so it is triangle-free by Proposition \ref{lowdim}. I claim that every one-dimensional extension of it is still triangle-free; thus we get a family of nine-dimensional triangle-free operator systems in $M_4$.

To prove the claim, let $P$ be a rank 3 projection in $M_4$; we will show that there is a nonzero matrix in $M_2 \oplus M_2$ whose compression to the range of $P$ is zero. This means that if $\mathcal{V}$ is any one-dimensional extension of $M_2 \oplus M_2$, so that ${\rm dim}(\mathcal{V}) = 9$, then ${\rm dim}(P\mathcal{V}P) \leq 8$ and so $P$ cannot be a 3-clique for $\mathcal{V}$.

Let $v$ be a unit vector with $Pv = 0$, so that the range of $P$ is the orthocomplement of $\mathbb{C}\cdot v$. Without loss of generality (replacing the original orthonormal basis $\{e_1, e_2, e_3, e_4\}$ of $\mathbb{C}^4$ with one of the form $\{e_1', e_2', e_3', e_4'\}$ such that ${\rm span}(e_1,e_2) = {\rm span}(e_1', e_2')$ and ${\rm span}(e_3, e_4) = {\rm span}(e_3', e_4')$) we can assume that $v$ has the form $v = (\alpha, 0, \beta, 0)$ for some $\alpha,\beta \in \mathbb{C}$. If $\beta = 0$ then the matrix $$\left[\begin{matrix}1&&&\cr &0&&\cr &&0&\cr &&&0\end{matrix}\right]$$ annihilates every vector in the range of $P$, which entails that its compression to the range of $P$ is zero. That proves the claim in this case. The case $\alpha = 0$ is similar.

Otherwise, if $\alpha$ and $\beta$ are both nonzero, then every vector $w$ that is orthogonal to $v$ has the form $w = (-a\overline{\beta}, b, a\overline{\alpha}, c)$ for some $a,b,c \in \mathbb{C}$ and the matrix $$\left[\begin{matrix}-\alpha/\overline{\beta}&&&\cr&0&&\cr &&\beta/\overline{\alpha}&\cr &&&0\end{matrix}\right]$$ takes every such vector to a scalar multiple of $v$. So again we have a nonzero matrix in $M_2 \oplus M_2$ whose compression to the range of $P$ is zero.
\end{exmp}

Example \ref{m2ds} provides a family of nine-dimensional triangle-free operator systems in $M_4$. Here is one more.

\begin{exmp}\label{skewm2ds}
Let $\mathcal{V} \subset M_4$ be the span of the identity matrix $I_4$ and all matrices of the form $$\left[\begin{matrix}0&A\cr B&0\end{matrix}\right]$$ with $A,B \in M_2$. We have to add in the identity matrix to get an operator system, but even without it we can make an argument similar to the one in Example \ref{m2ds} that there is something in the kernel of the compression to the range of any rank 3 projection. So ${\rm dim}(P\mathcal{V}P) \leq 8$ for any rank 3 projection $P$, and hence $\mathcal{V}$ is triangle-free.
\end{exmp}

The operator system in Example \ref{skewm2ds} is not unitarily equivalent to any of those in Example \ref{m2ds}. For instance, it contains no rank 1 projections. (Let $v = (a,b,c,d)$ be an arbitrary unit vector in $\mathbb{C}^4$ and compare the matrix entries of $vv^*$ to the form of the matrices in $\mathcal{V}$ from Example \ref{skewm2ds}.)

Finally, here is an example of a three-dimensional strongly triangle-free operator system.

\begin{exmp}\label{except}
Let $\mathcal{V} \subset M_4$ consist of all $4\times 4$ matrices of the form
$$\left[\begin{matrix}aI_2&bI_2\cr bI_2&cI_2\end{matrix}\right]$$ with $a,b,c \in \mathbb{C}$. I claim that every rank 3 projection dominates a 2-anticlique.

Let $A,B \in \mathcal{V}$ be the matrices with $a = 1$, $b = c = 0$ and $b = 1$, $a = c = 0$, respectively. Then $\mathcal{V} = {\rm span}(I_4, A, B)$. It will suffice to show that every rank 3 projection $Q$ dominates a rank 2 projection $P$ such that $PAP$ and $PBP$ are both scalar multiples of $P$. So let $Q$ be a rank 3 projection. By counting dimensions, the intersections of the range of $Q$ with ${\rm span}(e_1, e_2)$ and ${\rm span}(e_3, e_4)$ must both be nonzero. Without loss of generality $e_1,e_4 \in {\rm ran}(Q)$ and ${\rm ran}(Q)$ has an orthonormal basis of the form $\{e_1, v, e_4\}$ where $v = \sqrt{t}e_2 + \sqrt{1-t}e_3$ for some $t \in [0,1]$.

With respect to this basis, $QAQ$ has the form $$\left[\begin{matrix}1&0&0\cr 0&t&0\cr 0&0&0\end{matrix}\right]$$ and $QBQ$ has the form $$\left[\begin{matrix}0&\sqrt{1-t}&0\cr \sqrt{1-t}&0&\sqrt{t}\cr 0&\sqrt{t}&0\end{matrix}\right].$$ Taking $P$ to be the orthogonal projection onto the span of $v$ and $w = \sqrt{t}e_1 - \sqrt{1-t}e_4$, we get $PAP = tP$ and $PBP = 0$.
\end{exmp}

\subsection{Diagonal operator systems}

The {\it diagonal operator system} $\mathcal{D}_n$ consists of the diagonal matrices in $M_n$. It is an $n$-dimensional operator system that is spanned by the rank one projections $e_ie_i^*$.

Although the matrices in $\mathcal{D}_n$ are quite sparse (i.e., most entries are zero), it has no $2$-anticliques for any $n$ (\cite{weaverx}, Proposition 2.1). This follows from the fact that $\mathcal{D}_n$ is spanned by rank one matrices, which any putative 2-anticlique would have to compress to zero. Thus while $\mathcal{D}_n$ is trivially strongly triangle-free for $n = 1$ or $2$, it is not strongly triangle-free for any larger value of $n$. This simple observation has a surprisingly strong consequence!

\begin{prop}\label{noncom}
Let $\mathcal{V} \subseteq M_n$ be an operator system that contains commuting Hermitian matrices $A$ and $B$. Assume $I_n$, $A$, and $B$ are linearly independent. Then $\mathcal{V}$ is not strongly triangle-free.
\end{prop}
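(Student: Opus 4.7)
The plan is to reduce the general statement to the diagonal operator system case that the author has just observed. Since $A$ and $B$ are commuting Hermitian matrices, they are simultaneously diagonalizable, so after conjugating by a unitary we may assume that both are diagonal, with $A = \mathrm{diag}(\alpha_1,\ldots,\alpha_n)$ and $B = \mathrm{diag}(\beta_1,\ldots,\beta_n)$. In particular, $\mathrm{span}(I_n,A,B) \subseteq \mathcal{D}_n$.

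Next, I would translate the linear independence of $I_n, A, B$ into a statement about triples of indices. The hypothesis says that no nontrivial linear combination $aI_n + bA + cB$ is zero, which is to say that the $n$ vectors $(1,\alpha_i,\beta_i) \in \mathbb{C}^3$ (for $i = 1,\ldots,n$) together span all of $\mathbb{C}^3$. Hence one can pick indices $i,j,k$ such that the three vectors $(1,\alpha_i,\beta_i), (1,\alpha_j,\beta_j), (1,\alpha_k,\beta_k)$ form a basis of $\mathbb{C}^3$.

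Now let $Q$ be the orthogonal projection onto $\mathrm{span}(e_i,e_j,e_k)$, and identify $\mathrm{ran}(Q)$ with $\mathbb{C}^3$ via this ordered basis. Then $QI_nQ$, $QAQ$, $QBQ$ are the three diagonal matrices $I_3$, $\mathrm{diag}(\alpha_i,\alpha_j,\alpha_k)$, $\mathrm{diag}(\beta_i,\beta_j,\beta_k)$; by the choice of $i,j,k$ they are linearly independent in $M_3$ and so span the entire three-dimensional diagonal subalgebra $\mathcal{D}_3$. Consequently $\mathcal{D}_3 \subseteq Q\mathcal{V}Q$.

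Finally, suppose for contradiction that $Q$ dominated a $2$-anticlique $P$ for $\mathcal{V}$; then $P \leq Q$ implies $P\mathcal{V}P = P(Q\mathcal{V}Q)P$, so $P$ would be a $2$-anticlique for $Q\mathcal{V}Q$ and a fortiori for the subsystem $\mathcal{D}_3$. But the author has just noted, invoking Proposition~2.1 of \cite{weaverx}, that $\mathcal{D}_3$ admits no $2$-anticlique (its spanning rank-one matrices $e_ie_i^*$ would all have to compress to zero). This contradiction shows that $Q$ dominates no $2$-anticlique, so $\mathcal{V}$ is not strongly triangle-free. The only possible obstacle is verifying the "no $2$-anticlique in $\mathcal{D}_3$" step, but this is precisely the observation already flagged in the paragraph preceding the proposition, so the proof essentially consists of bookkeeping around simultaneous diagonalization.
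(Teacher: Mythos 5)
Your proposal is correct and follows essentially the same route as the paper: simultaneously diagonalize $A$ and $B$, use linear independence of $I_n, A, B$ (equivalently, that the row vectors $(1,\alpha_i,\beta_i)$ span $\mathbb{C}^3$) to select three coordinates on which the compression contains $\mathcal{D}_3$, and conclude via the absence of 2-anticliques in $\mathcal{D}_3$. The paper merely phrases the index selection as a ``without loss of generality'' reduction to the first three coordinates, which your explicit rank argument justifies.
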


\begin{proof}
Fix an orthonormal basis $(v_i)$ that diagonalizes both $A$ and $B$ and say $Av_i = \lambda_i v_i$ and $Bv_i = \mu_i v_i$ for all $i$. Since $I_n$, $A$, and $B$ are linearly independent, without loss of generality we can assume that the vectors $(1, 1, 1)$, $(\lambda_1, \lambda_2, \lambda_3)$, and $(\mu_1, \mu_2, \mu_3)$ are linearly independent in $\mathbb{C}^3$, and therefore span it. Then the compression of $\mathcal{V}$ to ${\rm span}(v_1, v_2, v_3) \cong \mathbb{C}^3$ contains $\mathcal{D}_3$ and therefore has no 2-anticliques. So $\mathcal{V}$ is not strongly triangle-free.
\end{proof}

We also know that for sufficiently large $n$ the operator system $\mathcal{D}_n$ has $k$-cliques. The value given in Proposition 2.2 of \cite{weaverx} was $n \geq k^2 + k -1$. This is not far from being optimal, because $P\mathcal{D}_nP \cong M_k$ implies that $n = {\rm dim}(\mathcal{D}_n) \geq {\rm dim}(M_k) = k^2$. But in fact, by using techniques from frame theory the perfectly sharp bound $n = k^2$ can be achieved: $\mathcal{D}_{k^2}$ has a $k$-clique for every $k \geq 1$.

\begin{thm}
For any $k \geq 1$ the diagonal operator system $\mathcal{D}_{k^2}$ has a $k$-clique.
\end{thm}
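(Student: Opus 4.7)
The plan is to use frame theory to translate the statement into finding $k^2$ vectors in $\mathbb{C}^k$ whose rank-one projectors both sum to $I_k$ and are linearly independent. Given a rank $k$ projection $P \in M_{k^2}$, I would pick an orthonormal basis $u_1, \ldots, u_k$ of $\mathrm{ran}(P)$, form the $k \times k^2$ matrix $V$ with $(j,i)$ entry $\langle e_i, u_j\rangle$, and let $v_1, \ldots, v_{k^2} \in \mathbb{C}^k$ be its columns. Then $V V^* = I_k$ (equivalently $\sum_i v_i v_i^* = I_k$), and under the isometry $V^*: \mathbb{C}^k \to \mathrm{ran}(P)$ the compression $Pe_ie_i^*P$ corresponds to $v_iv_i^* \in M_k$. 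Hence $P$ is a $k$-clique for $\mathcal{D}_{k^2}$ precisely when the projectors $\{v_iv_i^*\}_{i=1}^{k^2}$ span $M_k$, and conversely every Parseval frame of $k^2$ vectors in $\mathbb{C}^k$ arises this way. So the task reduces to constructing a suitable Parseval frame in $\mathbb{C}^k$.

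To produce such a frame I would use a Weyl-Heisenberg orbit. With $\omega = e^{2\pi i/k}$, let $S$ and $\Omega$ be the shift and clock operators on $\mathbb{C}^k$ defined by $Se_j = e_{j+1 \bmod k}$ and $\Omega e_j = \omega^j e_j$, and for $\psi \in \mathbb{C}^k$ with $\|\psi\|^2 = 1/k$ set $v_{a,b} = S^a\Omega^b \psi$ for $(a,b) \in \mathbb{Z}_k \times \mathbb{Z}_k$. Since the orbit sum $\sum_{a,b} S^a\Omega^b(\psi\psi^*)\Omega^{-b}S^{-a}$ is invariant under conjugation by every $S^c\Omega^d$ and since $\{S^c\Omega^d\}$ generates $M_k$, this sum must be a scalar multiple of $I_k$, and a trace computation pins the scalar down to $k\|\psi\|^2 = 1$. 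So the Parseval condition holds automatically for any $\psi$ of the prescribed norm.

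It remains to choose $\psi$ so that the projectors $v_{a,b}v_{a,b}^*$ are linearly independent in $M_k$, and this is the main obstacle. I would expand $\psi\psi^* = \sum_{c,d}\alpha_{cd} S^c\Omega^d$ in the Weyl-Heisenberg basis of $M_k$ and use the commutation relation $\Omega^b S^c = \omega^{bc}S^c\Omega^b$ twice to compute $v_{a,b}v_{a,b}^* = \sum_{c,d}\alpha_{cd}\,\omega^{bc - da}S^c\Omega^d$. The resulting $k^2 \times k^2$ coefficient matrix factors as the diagonal matrix of the $\alpha_{cd}$'s times a matrix which, up to reindexing, is a tensor product of two $k \times k$ DFT matrices and hence invertible. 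Thus the projectors are linearly independent if and only if every $\alpha_{cd}$ is nonzero. Since $\alpha_{cd} = \frac{1}{k}\psi^*(S^c\Omega^d)^*\psi$ is a nonzero sesquilinear form in $\psi$ (as $(S^c\Omega^d)^*$ is a nonzero matrix, and a form $\psi \mapsto \psi^* M \psi$ vanishes identically only when $M = 0$), its zero set is a proper closed subset of $\mathbb{C}^k$, so the joint non-vanishing locus is a nonempty open set. Any $\psi$ in it, rescaled to norm $1/\sqrt{k}$, then produces a rank $k$ projection $P = V^*V$ that is a $k$-clique for $\mathcal{D}_{k^2}$.
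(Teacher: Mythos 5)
Your proposal is correct, and while the overall strategy is the same as the paper's --- reduce the problem to exhibiting a Parseval frame $(v_r)_{1}^{k^2}$ in $\mathbb{C}^k$ whose rank-one matrices $v_rv_r^*$ span $M_k$, then transport it to a rank $k$ projection in $M_{k^2}$ via the associated isometry --- your construction of the frame is genuinely different. The paper writes down $k^2$ explicit vectors (the $e_i$, the $e_i + e_j$, and the $e_i + \iota e_j$), for which spanning of $M_k$ is immediate because these rank-one matrices visibly generate all matrix units, and then enforces the Parseval condition afterwards by applying $S^{-1/2}$ where $S = \sum v_rv_r^*$, which preserves linear independence. You instead take a Weyl--Heisenberg orbit $S^a\Omega^b\psi$, which makes the Parseval condition automatic for \emph{every} fiducial $\psi$ of the right norm (by the irreducibility/invariance argument), and you pay for this with a genericity argument for linear independence: expanding in the $S^c\Omega^d$ basis, independence holds iff all coefficients $\alpha_{cd}$ of $\psi\psi^*$ are nonzero, which holds for generic $\psi$. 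That argument is sound; the only imprecision is your final sentence: a finite union of \emph{proper} closed subsets can cover $\mathbb{C}^k$, so you should say that each zero set $\{\psi : \psi^*M\psi = 0\}$ with $M \neq 0$ is the zero set of a nonzero real polynomial in the real coordinates of $\psi$ and hence is nowhere dense (or has measure zero), so the finitely many bad sets cannot cover $\mathbb{C}^k$. With that fixed, your proof is complete. The trade-off: the paper's construction is fully explicit and elementary, while yours is non-constructive in the choice of $\psi$ but shows that a dense open set of fiducial vectors works and ties the result to the standard fact that generic Gabor frames are informationally complete.
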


\begin{proof}
Let $(v_r)_1^{k^2}$ be the $k^2$ vectors in $\mathbb{C}^k$ consisting of the $k$ standard basis vectors $e_i$, the $\frac{k^2 - k}{2}$ vectors $e_i + e_j$ ($i < j$), and the $\frac{k^2 - k}{2}$ vectors $e_i + \iota e_j$ ($i < j$). (Here I write $\iota = \sqrt{-1}$ to avoid confusion with the index $i$.) It is easy to see that the matrices $v_r v_r^*$ span $M_k$. Since ${\rm dim}(M_k) = k^2$, it follows that these matrices are linearly independent.

The sum $S = \sum v_r v_r^*$ is positive and greater than $I_k$, so it is invertible. Now for each $r$ define $w_r = S^{-1/2}v_r$. Thus $$\sum w_r w_r^* = S^{-1/2}\left(\sum v_r v_r^*\right) S^{-1/2} = S^{-1/2}SS^{-1/2} = I_k,$$
and linear independence of the matrices $v_r v_r^*$ implies linear independence of the matrices $w_r w_r^* = S^{-1/2}v_r v_r^* S^{-1/2}$.

We can now embed $\mathbb{C}^k$ in $\mathbb{C}^{k^2}$ by the map $T: v \mapsto \sum_{r = 1}^{k^2} \langle v, w_r\rangle e_r$. The adjoint map $T^*$ satisfies $T^*e_r = w_r$, so that $T^*Tv = \sum \langle v, w_r\rangle w_r = \sum w_r w_r^* v = v$ for all $v \in \mathbb{C}^k$, i.e., $T$ is an isometry. Meanwhile, $P = TT^*$ is the orthogonal projection of $\mathbb{C}^{k^2}$ onto ${\rm ran}(T) \cong \mathbb{C}^k$, and the matrices $$Pe_r e_r^*P = Tw_r w_r^* T^*$$ are linearly independent because the matrices $w_r w_r^*$ are linearly independent. Thus ${\rm dim}(P\mathcal{D}_{k^2}P) = k^2$ and so $P$ is a $k$-clique.
\end{proof}

(In the language of frame theory, we have found a Parseval frame $(w_r)_1^{k^2}$ in $\mathbb{C}^k$ for which the matrices $w_r w_r^*$ span $M_k$.)

\begin{cor}\label{d3}
$\mathcal{D}_n$ is strongly triangle-free only for $n \leq 2$ and triangle-free only for $n \leq 8$.
\end{cor}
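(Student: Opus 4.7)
The plan is to handle the two assertions separately; in each case the ``if'' direction ($\mathcal{D}_n$ strongly triangle-free for $n \leq 2$ and triangle-free for $n \leq 8$) follows at once from Proposition \ref{lowdim} since $\dim(\mathcal{D}_n) = n$. So the work is in the contrapositives: show $\mathcal{D}_n$ is not strongly triangle-free for $n \geq 3$ and not triangle-free for $n \geq 9$.

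For the strongly triangle-free direction, I will invoke the fact (already noted in the preceding text) that $\mathcal{D}_n$ has no $2$-anticliques for any $n$. As soon as $n \geq 3$, rank $3$ projections exist in $M_n$, and strong triangle-freeness would require every such projection to dominate a $2$-anticlique; the nonexistence of any $2$-anticlique for $\mathcal{D}_n$ immediately precludes this, so $\mathcal{D}_n$ fails to be strongly triangle-free for every $n \geq 3$.

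For the triangle-free direction with $n \geq 9$, I will lift a $3$-clique from $\mathcal{D}_9$ to $\mathcal{D}_n$. The preceding theorem with $k = 3$ provides a rank $3$ projection $P$ on $\mathbb{C}^9$ with $P \mathcal{D}_9 P \cong M_3$. Identifying $\mathbb{C}^9$ with $\mathrm{span}(e_1, \ldots, e_9) \subseteq \mathbb{C}^n$ and setting $Q = \sum_{i=1}^9 e_i e_i^*$, we have $P \leq Q$ and $Q \mathcal{D}_n Q$ is canonically $\mathcal{D}_9$. Since $PAP = P(QAQ)P$ whenever $P \leq Q$, this gives $P\mathcal{D}_n P = P\mathcal{D}_9 P \cong M_3$, exhibiting $P$ as a $3$-clique for $\mathcal{D}_n$ and contradicting triangle-freeness.

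There is no substantive obstacle: the corollary is merely the conjunction of Proposition \ref{lowdim}, the absence of $2$-anticliques in diagonal systems, and the theorem just proved. The one mechanical point worth flagging is the observation that a $k$-clique of a compression $Q\mathcal{V}Q$ remains a $k$-clique of $\mathcal{V}$ itself, which follows from the identity $PAP = P(QAQ)P$ for $P \leq Q$.
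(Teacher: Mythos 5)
Your proposal is correct and matches the paper's intended argument: the paper derives this corollary from Proposition \ref{lowdim}, the remark that $\mathcal{D}_n$ has no $2$-anticliques (so no rank $3$ projection can dominate one once $n \geq 3$), and the theorem that $\mathcal{D}_9$ has a $3$-clique. Your explicit verification that the $3$-clique of the compression $\mathcal{D}_9 = Q\mathcal{D}_nQ$ lifts to $\mathcal{D}_n$ via $PAP = P(QAQ)P$ is exactly the mechanical point the paper leaves implicit.
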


\begin{cor}
Any abelian operator system whose dimension is at least 9 has a 3-clique, i.e., it is not triangle-free.
\end{cor}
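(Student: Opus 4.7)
The plan is to reduce the general abelian case to the already-settled case of $\mathcal{D}_9$ and then invoke the preceding theorem at $k = 3$. First, since every pair of operators in $\mathcal{V}$ commutes and $\mathcal{V}$ is $*$-closed, the Hermitian elements of $\mathcal{V}$ form a commuting family of Hermitian matrices and can be simultaneously diagonalized by a unitary $U$, giving $U^*\mathcal{V}U \subseteq \mathcal{D}_n$. Since having a $3$-clique is preserved by unitary conjugation, I may assume from the outset that $\mathcal{V} \subseteq \mathcal{D}_n$.

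Next, I identify $\mathcal{V}$ with its subspace of diagonal vectors $V \subseteq \mathbb{C}^n$, which by hypothesis has $\dim V \geq 9$. The plan is then to extract nine coordinates $i_1, \dots, i_9$ such that the coordinate projection $V \to \mathbb{C}^9$ is surjective. This is a standard linear-algebraic move: writing a basis of $V$ as the rows of a $(\dim V) \times n$ matrix of rank $\geq 9$, one can pick $9$ linearly independent columns. Setting $Q$ to be the orthogonal projection onto $\mathrm{span}(e_{i_1}, \dots, e_{i_9})$, this surjectivity says exactly that the compression $Q\mathcal{V}Q$ fills out all of $\mathcal{D}_9$ inside $QM_nQ \cong M_9$.

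Finally I apply the preceding theorem with $k = 3$: $\mathcal{D}_9$ has a $3$-clique, i.e., a rank $3$ projection $P$, which lies under $Q$ so that $P = PQ = QP$, with $P\mathcal{D}_9P = M_3$. For every $D \in \mathcal{V}$ one then has $PDP = P(QDQ)P$, and since $QDQ$ ranges over all of $\mathcal{D}_9$ as $D$ ranges over $\mathcal{V}$, we get $P\mathcal{V}P = P\mathcal{D}_9P = M_3$, exhibiting $P$ as a $3$-clique for $\mathcal{V}$.

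The only mildly non-automatic step is the coordinate-selection step that realizes a full copy of $\mathcal{D}_9$ as a compression of $\mathcal{V}$, and even that is a routine rank argument about a matrix whose rows encode a basis of $V$. Beyond that, there is no real obstacle: the corollary is essentially just the preceding theorem, together with the observation that any abelian operator system is unitarily equivalent to a subspace of $\mathcal{D}_n$.
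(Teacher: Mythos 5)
Your proof is correct and follows exactly the route the paper intends: simultaneously diagonalize the abelian system, select nine coordinates on which the diagonal subspace projects surjectively so that the compression is all of $\mathcal{D}_9$ (this is the ``same reasoning as in Proposition \ref{noncom}''), and then invoke the theorem that $\mathcal{D}_{k^2}$ has a $k$-clique with $k=3$. Nothing to add.
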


(Since it compresses to $\mathcal{D}_9$, by the same reasoning as in Proposition \ref{noncom}.)

\subsection{Complete bipartite operator systems}

\begin{defn}\label{cbdef}
Let $m,n \geq 1$. The {\it complete bipartite operator system $\mathcal{K}_{m,n}$} is the linear subspace of $M_{m+n}$ consisting of all matrices of the form $$\left[\begin{matrix}aI_m&A\cr B&bI_n\end{matrix}\right]$$ with $a,b \in \mathbb{C}$, $A \in M_{m,n}$, and $B \in M_{n,m}$.
\end{defn}

Up to unitary equivalence, these are precisely the maximal two-colorable operator systems; see the comment following Proposition \ref{kkclique} below.

The operator systems $\mathcal{K}_{1,n}$ give us examples of strongly triangle-free operator systems of arbitrarily large dimension. (The dimension of $\mathcal{K}_{1,n}$ is $2n + 2$.) The fact that they are triangle-free was proven in Proposition 2.3 of \cite{weaverx}, but the argument given there actually shows that they are strongly triangle-free.

\begin{prop}\label{1ntf}
For any $n \geq 1$ the operator system $\mathcal{K}_{1,n}$ is strongly triangle-free.
\end{prop}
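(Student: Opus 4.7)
The plan is to exploit a dimension count: for any rank $3$ projection $Q$ on $\mathbb{C}^{n+1}$, the range of $Q$ must have nontrivial intersection with $e_1^\perp$ of dimension at least $2$. I will then observe that the projection onto any $2$-dimensional subspace of $e_1^\perp$ is automatically a $2$-anticlique for $\mathcal{K}_{1,n}$, because every basis element of $\mathcal{K}_{1,n}$ except the identity involves $e_1$ in a way that is killed by the compression.

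First I would fix the decomposition $\mathbb{C}^{n+1} = \mathbb{C}\cdot e_1 \oplus e_1^\perp$ matching the $(1,n)$ block structure in Definition \ref{cbdef}, and pick a natural basis for $\mathcal{K}_{1,n}$: the identity $I_{n+1}$, the rank one projection $E_1 = e_1 e_1^*$, and the off-diagonal matrices $e_1 v^*$ and $v e_1^*$ for $v$ ranging over a basis of $e_1^\perp$. These clearly span $\mathcal{K}_{1,n}$.

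Next, given a rank $3$ projection $Q$, I would note that
$$\dim\bigl(\operatorname{ran}(Q) \cap e_1^\perp\bigr) \geq \dim(\operatorname{ran}(Q)) + \dim(e_1^\perp) - (n+1) = 3 + n - (n+1) = 2,$$
so there is a $2$-dimensional subspace $W \subseteq \operatorname{ran}(Q)$ with $W \perp e_1$. Let $P$ be the orthogonal projection onto $W$; then $P \leq Q$ and $P e_1 = 0$.

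Finally I would verify that $P \mathcal{K}_{1,n} P = \mathbb{C} \cdot P$. Since $P e_1 = 0$, we have $P E_1 P = (Pe_1)(Pe_1)^* = 0$, and likewise $P(e_1 v^*)P = 0 = P(v e_1^*)P$ for every $v \in e_1^\perp$. The only surviving contribution comes from the identity, giving $PIP = P$. Hence $P$ is a $2$-anticlique dominated by $Q$, as required. There is no real obstacle here; the only thing to notice is that the ``off-diagonal'' generators of $\mathcal{K}_{1,n}$ are all rank one with $e_1$ on one side, which makes the dimension count on $e_1^\perp$ do all the work.
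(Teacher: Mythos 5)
Your proof is correct and is essentially the paper's argument in a slightly more direct form: the $2$-anticlique you construct, namely the projection onto a $2$-dimensional subspace of ${\rm ran}(Q)\cap e_1^\perp$, is exactly the anticlique the paper obtains after observing that $Q\mathcal{K}_{1,n}Q$ sits inside a copy of $\mathcal{K}_{1,2}$ with first basis vector $Qe_1/\|Qe_1\|$ (note that for $w\in{\rm ran}(Q)$ one has $w\perp Qe_1$ iff $w\perp e_1$). Your dimension count also quietly absorbs the paper's separate case $Qe_1=0$, so no changes are needed.
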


\begin{proof}
Let $P \in M_{n+1}$ be a rank 3 projection. Note that $\mathcal{K}_{1,n}$ is spanned by the operators $I_{n+1}$, $e_1e_1^*$, and $e_1e_i^*$ and $e_ie_1^*$ for $2 \leq i \leq n + 1$. Let $v = Pe_1$. If $v = 0$ then all of these operators except $I_{n+1}$ compress to zero on the range of $P$, so $P$ is a 3-anticlique. Otherwise, all of these operators except $I_{n+1}$ compress to operators of the form $vw^*$ or $wv^*$ for some $w = Pe_i \in {\rm ran}(P)$. Thus they all belong to $\mathcal{K}_{1,2}$ relative to an orthonormal basis whose first vector is $\frac{v}{\|v\|}$. So $P\mathcal{K}_{1,n}P \subseteq \mathcal{K}_{1,2}$ (in fact, they are equal), showing that $P\mathcal{K}_{1,n}P$ has a 2-anticlique. We have shown that every rank 3 projection dominates a 2-anticlique.
\end{proof}

But these are the only triangle-free complete bipartite operator systems. In marked contrast to the classical setting, if $m$ and $n$ are both at least $2$ then $\mathcal{K}_{m,n}$ has a 3-clique.

\begin{prop}\label{binot}
Let $m,n \geq 2$. Then $\mathcal{K}_{m,n}$ has a 3-clique.
\end{prop}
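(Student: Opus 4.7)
The plan is to reduce to the case $m=n=2$ and then exhibit an explicit rank $3$ projection in $M_4$ that is a $3$-clique for $\mathcal{K}_{2,2}$.

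For the reduction, let $P_0 = e_1 e_1^* + e_2 e_2^* + e_{m+1} e_{m+1}^* + e_{m+2} e_{m+2}^* \in M_{m+n}$. Under the identification $\mathrm{ran}(P_0) \cong \mathbb{C}^4$ via the basis $(e_1, e_2, e_{m+1}, e_{m+2})$, Definition \ref{cbdef} makes it clear that $P_0 \mathcal{K}_{m,n} P_0$ equals $\mathcal{K}_{2,2} \subset M_4$ (the scalars $a,b$ are inherited, and the sub-blocks $A,B$ can be arbitrary $2\times 2$ matrices). Hence any rank $3$ projection $P \leq P_0$ satisfying $P\mathcal{K}_{2,2}P \cong M_3$ also satisfies $P \mathcal{K}_{m,n} P = P(P_0 \mathcal{K}_{m,n} P_0)P \cong M_3$, so $P$ is a $3$-clique for $\mathcal{K}_{m,n}$ as well.

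For $\mathcal{K}_{2,2}$, I would take $v = \tfrac{1}{2}(1,1,1,1) \in \mathbb{C}^4$ and $P = I_4 - vv^*$. Since $\dim \mathcal{K}_{2,2} = 10$ and $\dim PM_4P = 9$, proving $P$ is a $3$-clique reduces to showing that the compression $A \mapsto PAP$ restricted to $\mathcal{K}_{2,2}$ has a $1$-dimensional kernel. The kernel of compression on all of $M_4$ consists of matrices $M$ satisfying $M(v^\perp) \subseteq \mathbb{C}v$, equivalently $M = v\phi^* + yv^*$ with $\phi \perp v$ and $y \in \mathbb{C}^4$ (a $7$-dimensional subspace). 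Imposing the six block-scalarity conditions $M_{12}=M_{21}=M_{34}=M_{43}=0$ and $M_{11}=M_{22}$, $M_{33}=M_{44}$ becomes a concrete linear system in $(y,\phi)$: the four off-diagonal equations determine $\phi$ in terms of $y$ (via $\phi_2 = -\overline{y_1}$, etc.), the two equal-diagonal conditions force $y_1 = y_2$ and $y_3 = y_4$, and the orthogonality $\phi \perp v$ then reduces to $y_1 + y_2 + y_3 + y_4 = 0$. The only solutions are scalar multiples of $y = (1,1,-1,-1)$, giving a $1$-dimensional kernel spanned by
\[
M_0 = \begin{bmatrix} 0 & 0 & 1 & 1 \\ 0 & 0 & 1 & 1 \\ -1 & -1 & 0 & 0 \\ -1 & -1 & 0 & 0 \end{bmatrix}.
\]

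Sanity-checking $M_0$ is immediate: its diagonal blocks vanish (so $M_0 \in \mathcal{K}_{2,2}$), and for any $w = (a,b,c,d)$ with $a+b+c+d = 0$ one has $M_0 w = -(a+b)(1,1,1,1) \in \mathbb{C}v$, confirming $PM_0P = 0$. The main labor is the linear algebra in the previous paragraph, verifying that the six block-scalarity constraints together with $\phi \perp v$ have maximal rank on the $7$-dimensional kernel of compression; this is routine but is the only nontrivial step, and the symmetric choice of $v$ is what makes the linear system collapse cleanly.
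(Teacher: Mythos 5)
Your proposal is correct and follows essentially the same route as the paper: the same rank~3 projection onto the orthocomplement of $(1,1,1,1)$, the same dimension count $10-1=9$, and the same one-dimensional kernel spanned by $M_0$ (the paper just asserts the reduction to $m=n=2$ that you spell out, and it finds the kernel by testing the vectors $(1,-1,0,0)$, $(0,0,1,-1)$, $(1,0,-1,0)$ rather than by your parametrization $M=v\phi^*+yv^*$ of the full compression kernel). The linear algebra you defer is in fact fully carried out by the relations $\phi_j=-\overline{y_i}$, $y_1=y_2$, $y_3=y_4$, $\sum y_i=0$ that you list, so nothing is missing.
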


\begin{proof}
It suffices to consider the case $m = n = 2$. Let $P \in M_4$ be the rank 3 projection whose range is the orthocomplement of the span of the vector $(1,1,1,1) \in \mathbb{C}^4$. Let us determine when an arbitrary element $A = \left[\begin{matrix}aI_2&A_1\cr A_2&bI_2\end{matrix}\right]$ of $\mathcal{K}_{2,2}$ compresses to zero on the range of $P$. This happens if and only if it takes every vector in the range of $P$ to a scalar multiple of $(1,1,1,1)$.

The vector $v = (1,-1,0,0)$ is orthogonal to $(1,1,1,1)$ and hence belongs to the range of $P$. So $PAP = 0$ implies that $Av = (a, -a, \ast, \ast)$ is a scalar multiple of $(1,1,1,1)$, and this means, first, that $a = 0$, and second, that the two columns of $A_2$ are equal. Similar reasoning with the vector $(0,0,1,-1)$ shows that $b = 0$ and the two columns of $A_1$ are equal. Next, we use the fact that $A$ must take the vector $(1,0,-1,0)$ to a scalar multiple of $(1,1,1,1)$ to infer that $A_1 = \left[\begin{matrix}\alpha&\alpha\cr \alpha&\alpha\end{matrix}\right]$ and $A_2 = \left[\begin{matrix}-\alpha&-\alpha\cr -\alpha&-\alpha\end{matrix}\right]$ for some $\alpha \in \mathbb{C}$.

We have shown that the kernel of the compression map $A \mapsto PAP$ from $\mathcal{K}_{2,2}$ to $PM_4P \cong M_3$ is the span of the single matrix $$\left[\begin{matrix}&&1&1\cr &&1&1\cr -1&-1&&\cr -1&-1&&\end{matrix}\right].$$ (It is routine to check that this matrix does compress to zero on the range of $P$.) Thus the kernel is one-dimensional, and since ${\rm dim}(\mathcal{K}_{2,2}) = 10$ we conclude that ${\rm dim}(P\mathcal{K}_{2,2}P) = 9$. So $P$ is a 3-clique.
\end{proof}

\section{Triangle-free operator systems}

I noted in the introduction that every strongly triangle-free operator system is triangle-free. Actually, a little more is true.

\begin{prop}\label{plustwo}
Let $\mathcal{V} \subseteq \mathcal{W} \subseteq M_n$ be operator systems with ${\rm dim}(\mathcal{W}) \leq {\rm dim}(\mathcal{V}) + 2$. If $\mathcal{V}$ is strongly triangle-free, then $\mathcal{W}$ is triangle-free.
\end{prop}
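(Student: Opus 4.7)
The plan is to argue by contradiction: suppose $\mathcal{W}$ has a 3-clique, i.e., a rank 3 projection $Q$ with $Q\mathcal{W}Q \cong M_3$. Since $\mathcal{V}$ is strongly triangle-free, $Q$ dominates a 2-anticlique $P$ for $\mathcal{V}$; that is, $P \leq Q$ is a rank 2 projection with $P\mathcal{V}P = \mathbb{C}\cdot P$. The key move is to look at the further compression to $P$ and count dimensions carefully on both $\mathcal{V}$ and $\mathcal{W}$.

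Consider the linear map $\phi : \mathcal{W} \to PM_nP$ defined by $\phi(A) = PAP$. Because $P \leq Q$, we have $\phi(A) = P(QAQ)P$, so the image $\phi(\mathcal{W}) = P\mathcal{W}P$ equals $P(Q\mathcal{W}Q)P$. Since $Q\mathcal{W}Q$ is all of $M_3$ on ${\rm ran}(Q)$, and $P$ is a rank 2 subprojection of $Q$, this image is $PM_nP \cong M_2$ restricted to ${\rm ran}(P)$, which has dimension $4$. On the other hand, the image $\phi(\mathcal{V}) = P\mathcal{V}P = \mathbb{C}\cdot P$ has dimension $1$.

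Applying the rank-nullity theorem to $\phi|_{\mathcal{W}}$ and to $\phi|_{\mathcal{V}}$ yields
$$\dim(\mathcal{W}) = \dim(\mathcal{W} \cap \ker\phi) + 4 \qquad \text{and} \qquad \dim(\mathcal{V}) = \dim(\mathcal{V} \cap \ker\phi) + 1.$$
Since $\mathcal{V} \subseteq \mathcal{W}$ we have $\mathcal{V} \cap \ker\phi \subseteq \mathcal{W} \cap \ker\phi$, so $\dim(\mathcal{V}) - 1 \leq \dim(\mathcal{W}) - 4$, i.e., $\dim(\mathcal{W}) \geq \dim(\mathcal{V}) + 3$. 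This contradicts the hypothesis $\dim(\mathcal{W}) \leq \dim(\mathcal{V}) + 2$, completing the proof.

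I don't anticipate a real obstacle here; the argument is essentially a dimension count. The only point requiring slight care is the identification of $P(Q\mathcal{W}Q)P$ with $M_2$: one needs $Q\mathcal{W}Q$ to be literally all of $B({\rm ran}(Q))$ (which is exactly the 3-clique hypothesis) so that compressing further to $P$ yields all of $B({\rm ran}(P))$, hence dimension $4$ rather than something smaller.
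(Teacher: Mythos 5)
Your argument is correct and is essentially the paper's own proof: both rest on the observation that compressing to a rank 2 subprojection $P$ of a 3-clique $Q$ for $\mathcal{W}$ sends $\mathcal{W}$ onto a 4-dimensional space, so the image of the codimension-$\le 2$ subspace $\mathcal{V}$ has dimension at least 2 and $P$ cannot be an anticlique. The only difference is cosmetic (you run it as a contradiction via rank--nullity, the paper as a contrapositive), so nothing further is needed.
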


\begin{proof}
Suppose $\mathcal{W}$ has a 3-clique $Q$ and let $P \leq Q$ be a rank 2 projection. Then $P\mathcal{W}P = PQ\mathcal{W}QP \cong M_2$ means that the dimension of the image of $\mathcal{W}$ under the map $A \mapsto PAP$ is 4, so the dimension of $P\mathcal{V}P$, the image of $\mathcal{V}$ under this map, is at least 2. So $P$ is not an anticlique for $\mathcal{V}$. Thus $Q$ does not dominate a 2-anticlique for $\mathcal{V}$. We have shown that if $\mathcal{W}$ is not triangle-free then $\mathcal{V}$ is not strongly triangle-free.
\end{proof}

Combining this result with Proposition \ref{1ntf} yields the following corollary.

\begin{cor}
For any $n \geq 1$, every two-dimensional extension of $\mathcal{K}_{1,n}$ is triangle-free. 
\end{cor}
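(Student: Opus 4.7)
The plan is to apply Proposition \ref{plustwo} directly, taking $\mathcal{V} = \mathcal{K}_{1,n}$ and letting $\mathcal{W}$ denote an arbitrary two-dimensional extension of $\mathcal{K}_{1,n}$. By Proposition \ref{1ntf}, $\mathcal{K}_{1,n}$ is strongly triangle-free, so the hypothesis of Proposition \ref{plustwo} on $\mathcal{V}$ is satisfied. The phrase ``two-dimensional extension'' means that $\mathcal{W}$ is an operator system with $\mathcal{K}_{1,n} \subseteq \mathcal{W} \subseteq M_{n+1}$ and $\dim(\mathcal{W}) = \dim(\mathcal{K}_{1,n}) + 2$, which is precisely the dimension inequality required in Proposition \ref{plustwo}.

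Thus the corollary is immediate: Proposition \ref{plustwo} yields that $\mathcal{W}$ is triangle-free. There is essentially no obstacle — the entire content has already been set up by the two preceding results, with Proposition \ref{1ntf} supplying the strongly triangle-free hypothesis and Proposition \ref{plustwo} supplying the two-dimensional slack. The proof is therefore just a matter of citing these two results in sequence.
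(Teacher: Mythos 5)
Your proposal is correct and is exactly the paper's argument: the corollary is stated as an immediate combination of Proposition \ref{plustwo} (applied with $\mathcal{V} = \mathcal{K}_{1,n}$ and $\mathcal{W}$ the extension) and Proposition \ref{1ntf}. Nothing further is needed.
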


This is sharp: according to Proposition 3.2 of \cite{weavery}, every three-dimensional extension of $\mathcal{K}_{1,n}$ {\it does} have a 3-clique.

I have two more general results about triangle-free operator systems, both taken from \cite{weavery}. First, let us define the {\it degree} of a unit vector $v \in \mathbb{C}^n$ relative to an operator system $\mathcal{V} \subseteq M_n$ to be the dimension of $\mathcal{V}v$. The {\it minimal degree} of $\mathcal{V}$ will then be the minimal value of this dimension, taken over all unit vectors $v$.

Classical triangle-free graphs can have arbitrarily large minimal degree, because every bipartite graph is triangle-free. But quantum bipartite graphs generally do have 3-cliques (Proposition \ref{binot}), and in fact we can give an absolute upper bound on the minimal degree of a quantum triangle-free operator system. The next result is Lemma 3.6 of \cite{weavery} with $k = 3$.

\begin{thm}\label{mind}
Every operator system with minimal degree at least 52,488 has a 3-clique.
\end{thm}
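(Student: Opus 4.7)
The plan is to construct a 3-clique greedily, as a rank-3 projection $P = v_1 v_1^* + v_2 v_2^* + v_3 v_3^*$ for carefully chosen orthonormal $v_1, v_2, v_3$. The condition $P\mathcal{V}P \cong M_3$ is equivalent to saying that the nine linear functionals $A \mapsto \langle Av_j, v_i\rangle$ on $\mathcal{V}$ are linearly independent, which in turn means the nine compressed matrix units $v_i v_j^*$ can all be realized as $PA_{ij}P$ for some $A_{ij} \in \mathcal{V}$.

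First I would pick any unit vector $v_1$; by hypothesis $\dim(\mathcal{V}v_1) \geq 52{,}488$, so after projecting off the $v_1$-component a subspace $W_1 \subseteq v_1^\perp$ of dimension at least $52{,}487$ remains, each of whose unit vectors is an a priori viable candidate for $v_2$. The task is then to select $v_2 \in W_1$ so that the next-generation data $\mathcal{V}v_2$, after further projection onto $\mathrm{span}(v_1, v_2)^\perp$, still has very large dimension; since $\dim(\mathcal{V}v_2) \geq 52{,}488$ for every unit $v_2$, most choices survive this cut, and one identifies the bad set of $v_2$'s as the vanishing locus of a bounded number of rank conditions. A similar step selects $v_3$ from the remaining high-dimensional orthocomplement, again avoiding a bounded-complexity bad locus. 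Finally, one verifies that the nine $A_{ij}$'s harvested during the greedy steps compress to linearly independent operators on $\mathrm{ran}(P)$.

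The main obstacle is the bookkeeping of dimension losses through the greedy iteration: each step imposes a bounded number of linear-independence and nondegeneracy conditions, and one needs the residual dimension bound to comfortably exceed these at every stage so that a good choice exists. The threshold $52{,}488 = 8 \cdot 9^4$ strongly suggests an argument of exactly this shape, with each of a few transversality steps costing up to a factor of $9 = \dim M_3$ via pigeonholing, and a small multiplicative slack absorbing overhead for the three chosen vectors. I expect the cleanest formulation to come from the inductive proof of the general $k$-clique statement (Lemma 3.6 of \cite{weavery}), where the same mechanism should produce a threshold roughly of the form $c_k \cdot (k^2)^{O(k)}$ for a $k$-clique, specializing to the constant $52{,}488$ when $k = 3$.
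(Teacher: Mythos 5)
This paper does not actually prove Theorem \ref{mind}; it imports it verbatim as Lemma 3.6 of \cite{weavery} with $k=3$, so there is no internal proof to compare yours against and the intended ``proof'' here is a citation. Judged on its own terms, your proposal gets the right reformulation (a rank~3 projection $P$ with orthonormal range basis $v_1,v_2,v_3$ is a 3-clique iff the nine functionals $A\mapsto\langle Av_j,v_i\rangle$ are linearly independent on $\mathcal{V}$) and the right numerology ($52{,}488=8\cdot 3^8$), but it has a genuine gap exactly where the theorem lives. The minimal-degree hypothesis controls $\dim(\mathcal{V}v)$, i.e.\ the column space $\{Av:A\in\mathcal{V}\}$, whereas the clique condition demands that a $9\times\dim(\mathcal{V})$ matrix, depending quadratically on the triple $(v_1,v_2,v_3)$, have rank $9$. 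Your greedy candidate counts only guarantee that individual entry functionals such as $A\mapsto\langle Av_1,v_2\rangle$ are nonzero; that is far weaker than joint linear independence of all nine (already for a two-dimensional operator system every entry functional is generically nonzero while the compression has rank at most $2$). The sentence ``one identifies the bad set of $v_2$'s as the vanishing locus of a bounded number of rank conditions'' is therefore not a routine transversality step but the entire content of the theorem: the degeneracy locus is not a union of boundedly many proper subspaces that a dimension count on $W_1\subseteq v_1^\perp$ can avoid, and nothing in the sketch rules out that it is the whole sphere.

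Concretely, what is missing is any mechanism converting ``$\mathcal{V}v$ is huge for every $v$'' into ``the compression to some $3$-dimensional subspace is onto $M_3$.'' Compressing $\mathcal{V}v_j$ to $\mathrm{ran}(P)$ collapses a $52{,}488$-dimensional space to one of dimension at most $3$, so the loss is catastrophic and must be managed by a genuinely structural argument, not pigeonholing; this is what the (rather involved, inductive) proof of Lemma 3.6 in \cite{weavery} accomplishes. For the purposes of this paper the correct move is simply to cite that lemma; if you want a self-contained proof, you need to supply the argument that a good $(v_2,v_3)$ exists, which your outline currently asserts rather than proves.
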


We can also ensure 3-cliques by bounding the dimension of the operator system. This is Theorem 3.7 of \cite{weavery} with $k = 3$.

\begin{thm}\label{linn}
Every operator system in $M_n$ whose dimension is at least $104,976n$ has a 3-clique.
\end{thm}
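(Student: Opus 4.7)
The plan is to induct on $n$, using Theorem \ref{mind} to terminate the induction whenever the minimal degree is large, and a compression argument to drop $n$ by one whenever it is not.

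For the base case, note that the hypothesis $\dim(\mathcal{V}) \geq 104{,}976 n$ is vacuous for $n < 104{,}976$, since any operator system $\mathcal{V} \subseteq M_n$ has $\dim(\mathcal{V}) \leq n^2$.

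For the inductive step, let $\mathcal{V} \subseteq M_n$ satisfy $\dim(\mathcal{V}) \geq 104{,}976 n$. If the minimal degree of $\mathcal{V}$ is at least $52{,}488$, Theorem \ref{mind} immediately produces a 3-clique. Otherwise, choose a unit vector $v$ with $d := \deg(v) \leq 52{,}487$ and consider the self-adjoint subspace
$$\mathcal{V}_v := \{A \in \mathcal{V} : Av = 0 \text{ and } A^*v = 0\}.$$
The linear map $\Psi: \mathcal{V} \to \mathcal{V}v \oplus \mathcal{V}v$ sending $A \mapsto (Av, A^*v)$ has kernel $\mathcal{V}_v$ and image of dimension at most $2d$, so
$$\dim(\mathcal{V}_v) \geq \dim(\mathcal{V}) - 2d \geq 104{,}976 n - 104{,}974.$$
Each $A \in \mathcal{V}_v$ preserves $v^\perp$ (since $\langle Aw, v\rangle = \langle w, A^*v\rangle = 0$ for $w \in v^\perp$), and the restriction map $\mathcal{V}_v \to B(v^\perp)$ is injective (an element which kills both $v$ and $v^\perp$ is zero). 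Therefore $\tilde{\mathcal{V}} := \mathcal{V}_v|_{v^\perp} + \mathbb{C} I_{n-1}$ is an operator system in $M_{n-1}$ of dimension at least $104{,}976 n - 104{,}974 > 104{,}976 (n-1)$.

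The inductive hypothesis then furnishes a 3-clique $Q$ for $\tilde{\mathcal{V}}$. Extending $Q$ by zero to a rank-3 projection in $M_n$ whose range lies in $v^\perp$, I would verify
$$Q\mathcal{V}Q \supseteq Q\mathcal{V}_v Q + \mathbb{C}Q = Q\tilde{\mathcal{V}}Q \cong M_3,$$
so $Q$ is a 3-clique of $\mathcal{V}$. The main obstacle I anticipate is not conceptual but a matter of dimension bookkeeping: the constant $104{,}976 = 2 \cdot 52{,}488$ is precisely what is needed to keep the dimension loss $2d \leq 104{,}974$ strictly below $104{,}976$, which is what preserves the inductive invariant as one passes from $M_n$ to $M_{n-1}$. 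With that margin in place, the induction closes cleanly.
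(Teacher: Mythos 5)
Your argument is correct, and it is worth noting that the paper itself offers no proof of this theorem --- it only cites Theorem 3.7 of the quantum Tur\'{a}n paper --- while your induction (either the minimal degree is at least $52{,}488$ and Theorem \ref{mind} applies, or a unit vector $v$ of degree $d \leq 52{,}487$ lets you pass to the subspace $\mathcal{V}_v$ of codimension at most $2d$ acting on $v^\perp$) is precisely the mechanism behind that cited result, which is why the constant there is $2 \times 52{,}488$. All the details check out: $\mathcal{V}_v$ is self-adjoint, block-diagonal with respect to $\mathbb{C}v \oplus v^\perp$, and restricts injectively to $B(v^\perp)$; the count $104{,}976\,n - 104{,}974 > 104{,}976\,(n-1)$ closes the induction (with the base case vacuous for $n < 104{,}976$ since $\dim(\mathcal{V}) \leq n^2$); and a 3-clique for $\tilde{\mathcal{V}}$ supported in $v^\perp$ is a 3-clique for $\mathcal{V}$ because $Q\mathcal{V}Q \supseteq Q\tilde{\mathcal{V}}Q$.
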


The constants in these results are large, but the fact that they exist at all is remarkable. The dimension of $M_n$ is $n^2$, so for $n = 104,976a$ Theorem \ref{linn} says that every operator system in $M_n$ that takes up at least $\frac{1}{a}$ of the available dimensions must have a 3-clique. Classically, one can have triangle-free bipartite graphs in which over half of the available edges are present ($n^2$ edges in the complete bipartite graph with $n + n$ vertices, out of $2n^2 - n$ possible edges).

Substantial improvements in the constants in Theorems \ref{mind} and \ref{linn} would be interesting to see.

\section{Coloring}

In ordinary graph theory, a $k$-coloring of a graph is given by partitioning the vertex set into at most $k$ anticliques, and the chromatic number of a graph is the smallest value of $k$ for which there exists a $k$-coloring. Multiple versions of these concepts have been proposed and studied in the quantum graph setting. I feel that the following definition is the most natural one.

\begin{defn}\label{colordef}
A {\it $k$-coloring} of an operator system $\mathcal{V} \subseteq M_n$ is a sequence of projections $P_1, \ldots, P_k \in M_n$ satisfying $P_1 + \cdots + P_k = I_n$, such that each $P_i$ is an anticlique for $\mathcal{V}$. The {\it chromatic number} of $\mathcal{V}$ is the smallest value of $k$ for which $\mathcal{V}$ can be $k$-colored.
\end{defn}

However, this definition has some counterintuitive properties. For instance, a $k$-colorable operator system can have a $k$-clique!

\begin{prop}\label{kkclique}
(\cite{weavery}, Proposition 3.4)
For any $k \geq 1$, the operator system $\mathcal{K}_{k,k}$ is $2$-colorable but it has a $k$-clique.
\end{prop}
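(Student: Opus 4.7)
The proof is constructive: I would exhibit both a 2-coloring and a $k$-clique explicitly.

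For the 2-coloring, the natural choice is dictated by the block structure itself. Let $P_1 = \sum_{i=1}^{k} e_i e_i^*$ and $P_2 = \sum_{i=k+1}^{2k} e_i e_i^*$ be the rank $k$ projections onto the two halves of $\mathbb{C}^{2k}$. For any $A = \left[\begin{matrix} aI_k & A_1 \cr A_2 & bI_k \end{matrix}\right] \in \mathcal{K}_{k,k}$, a direct calculation gives $P_1 A P_1 = a P_1$ and $P_2 A P_2 = b P_2$, so $P_i \mathcal{K}_{k,k} P_i = \mathbb{C}\cdot P_i$ for $i = 1,2$. Since $P_1 + P_2 = I_{2k}$, these two anticliques form a valid 2-coloring in the sense of Definition \ref{colordef}.

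For the $k$-clique, I would take the ``diagonal'' rank $k$ projection $Q$ onto the span of the orthonormal vectors $u_i = \tfrac{1}{\sqrt{2}}(e_i + e_{k+i})$, $i = 1, \ldots, k$. A short computation shows that
$$\langle u_i, A u_j\rangle = \tfrac{1}{2}\bigl[(a+b)\delta_{ij} + (A_1)_{ij} + (A_2)_{ij}\bigr],$$
so $QAQ$, expressed as a matrix in the basis $(u_i)$, equals $\tfrac{1}{2}\bigl[(a+b)I_k + A_1 + A_2\bigr]$. Since $A_1$ and $A_2$ range independently over $M_k$, their sum already exhausts $M_k$, so $Q\mathcal{K}_{k,k}Q$ fills all of $M_k$ on $\mathrm{ran}(Q)$. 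Hence $\dim(Q\mathcal{K}_{k,k}Q) = k^2$ and $Q$ is a $k$-clique.

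The main step requiring thought is guessing the right form for $Q$. A subspace spanned by standard basis vectors, $\mathrm{span}(e_{i_1}, \ldots, e_{i_r}, e_{k+j_1}, \ldots, e_{k+j_s})$ with $r + s = k$, produces compressions with only $2 + 2rs$ free parameters, which reaches $k^2$ only in the boundary case $k = 2$. Pairing each $e_i$ with its partner $e_{k+i}$ is precisely what forces both off-diagonal blocks $A_1$ and $A_2$ to contribute to every entry of $QAQ$ simultaneously, thereby filling out the full $k^2$-dimensional image.
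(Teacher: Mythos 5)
Your proof is correct, and both halves check out: the block projections $P_1,P_2$ give the 2-coloring exactly as the paper indicates in its remark that ``every complete bipartite operator system can be trivially 2-colored,'' and your computation $QAQ = \tfrac{1}{2}\bigl[(a+b)I_k + A_1 + A_2\bigr]$ in the basis $(u_i)$ is right, so the image is all of $M_k$ and $Q$ is a $k$-clique. The paper itself does not prove the clique half --- it cites Proposition 3.4 of the Tur\'{a}n paper --- so there is nothing in-text to compare against there, but your explicit ``diagonal'' frame construction is a clean, self-contained argument, and your closing dimension count ($2+2rs < k^2$ unless $k=2$) correctly explains why coordinate subspaces cannot work and the pairing $e_i \leftrightarrow e_{k+i}$ is needed.
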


Indeed, every complete bipartite operator system can be trivially 2-colored. Conversely, given a 2-coloring of an operator system $\mathcal{V}$ by projections $P$ and $Q$ of respective ranks $m$ and $n$, we will have $\mathcal{V} \subseteq \mathcal{K}_{m,n}$ relative to an orthonormal basis that diagonalizes both $P$ and $Q$. (This should explain a comment made after Definition \ref{cbdef}.) 

It is also easy to see that the diagonal operator system $\mathcal{D}_k$ cannot be colored with fewer than $k$ colors.

\begin{prop}
For any $k \geq 1$, the operator system $\mathcal{D}_k$ has a $k$-coloring but no $(k-1)$-coloring.
\end{prop}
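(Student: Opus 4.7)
The plan is straightforward in both directions, and each rests on a single observation.

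For the upper bound, I would exhibit an explicit $k$-coloring by taking $P_i = e_i e_i^*$ for $1 \leq i \leq k$. These are pairwise orthogonal rank-$1$ projections summing to $I_k$, and each satisfies $\mathrm{dim}(P_i \mathcal{D}_k P_i) = 1$ trivially (since $P_i$ itself has rank $1$), hence is a $1$-anticlique for $\mathcal{D}_k$. So $\{P_1,\dots,P_k\}$ is a valid $k$-coloring.

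For the lower bound, I would invoke the fact already recalled in the paper (\cite{weaverx}, Proposition 2.1) that $\mathcal{D}_n$ has no $2$-anticliques for any $n$. In case a self-contained argument is preferred, it follows because if $P$ had rank $\geq 2$ and were an anticlique, then for each $i$ the compression $P e_i e_i^* P = (Pe_i)(Pe_i)^*$ has rank at most $1$ and so could only equal a scalar multiple of $P$ by being zero; this would force $Pe_i = 0$ for every $i$, contradicting the fact that $(e_i)$ spans $\mathbb{C}^n$. Consequently every anticlique for $\mathcal{D}_k$ has rank at most $1$.

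Now suppose, toward a contradiction, that $P_1,\dots,P_{k-1}$ is a $(k-1)$-coloring of $\mathcal{D}_k$. By the preceding paragraph, $\mathrm{tr}(P_j) \leq 1$ for each $j$, so
\[
k \;=\; \mathrm{tr}(I_k) \;=\; \sum_{j=1}^{k-1} \mathrm{tr}(P_j) \;\leq\; k-1,
\]
a contradiction. Hence no $(k-1)$-coloring exists, and the chromatic number of $\mathcal{D}_k$ is exactly $k$. There is no real obstacle here; the whole argument hinges on the already-established absence of $2$-anticliques in $\mathcal{D}_n$, and the rest is a trace count.
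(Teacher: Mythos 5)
Your proof is correct and follows the same route as the paper, which disposes of the lower bound in one line by citing the absence of $2$-anticliques in $\mathcal{D}_k$ (\cite{weaverx}, Proposition 2.1); you have merely made explicit the rank-one coloring and the trace count that the paper leaves implicit. No issues.
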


This is simply because $\mathcal{D}_k$ has no $2$-anticliques (\cite{weaverx}, Proposition 2.1).

It is natural to ask whether triangle-free or strongly triangle-free operator systems have any special colorability properties. My main result in this direction is negative: there are two dimensional operator systems with arbitrarily high chromatic number. Since every such operator system is strongly triangle-free (Proposition \ref{lowdim}), this means that there is no universal bound on the number of colors needed to color a strongly triangle-free operator system. (However, it will follow from Theorem \ref{maintheorem} that every strongly triangle-free operator system with dimension greater than two is 2-colorable.)

Every two dimensional operator system in $M_n$ has the form ${\rm span}(I,A)$ for some Hermitian matrix $A \in M_n$, so let us work with the matrix $A$ directly.

\begin{defn}\label{singleanti}
A {\it $k$-anticlique} for a matrix $A \in M_n$ is a rank $k$ projection $P \in M_n$ for which $PAP$ is a scalar multiple of $P$. That is, it is a $k$-anticlique for the operator system ${\rm span}(I_n, A)$. A {\it $k$-coloring} of $A$ is a sequence of anticliques $P_1, \ldots, P_k \in M_n$ satisfying $P_1 + \cdots + P_k = I_n$. That is, it is a $k$-coloring of the operator system ${\rm span}(I_n,A)$. The {\it chromatic number} of $A$ is the smallest value of $k$ for which $A$ can be $k$-colored.
\end{defn}

We can completely classify the chromatic numbers of Hermitian matrices in $M_n$ with $n \leq 4$. The cases $n = 1$ or $2$ are trivial, as $A$ can have a 1-coloring if and only if it is a scalar multiple of the identity. For $n = 3$ or $4$ we can say the following.

\begin{prop}
Every Hermitian $A \in M_3$ has a 2-coloring.
\end{prop}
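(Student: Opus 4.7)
The plan is to reduce the statement directly to Proposition \ref{biganti}. A 2-coloring of $A$ consists of projections $P_1, P_2 \in M_3$ with $P_1 + P_2 = I_3$, each of which is an anticlique for $A$. Since $P_1$ and $P_2$ are orthogonal projections whose ranks sum to $3$, one of them must have rank $1$ and the other rank $2$. A rank $1$ projection $P$ is automatically an anticlique for $A$, because $PAP = \langle Av,v\rangle P$ where $v$ is a unit vector spanning $\mathrm{ran}(P)$. So the entire content of the proposition is the existence of a rank $2$ anticlique for $A$.

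To produce such a rank $2$ anticlique, I would apply Proposition \ref{biganti} with $n = 3$: it asserts that $\mathrm{span}(I_3, A)$ has a $\lceil 3/2\rceil = 2$-anticlique, which is by definition a rank $2$ projection $P$ with $PAP \in \mathbb{C}\cdot P$. Setting $P_2 = P$ and $P_1 = I_3 - P_2$ then yields the desired 2-coloring, since $P_1$ is a rank $1$ projection and hence an anticlique by the observation above.

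There is essentially no obstacle here beyond citing Proposition \ref{biganti}; the only thing to verify is the trivial fact that rank $1$ projections are always anticliques, which completes the accounting of the two projections in the coloring.
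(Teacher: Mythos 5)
Your proof is correct and is essentially identical to the paper's: both apply Proposition \ref{biganti} with $n=3$ to obtain a rank $2$ anticlique $P$ and then pair it with the complementary rank $1$ projection $I_3 - P$, which is trivially an anticlique.
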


\begin{proof}
Use Proposition \ref{biganti} to find a $2$-anticlique $P$. Then $I_3 - P$ is a $1$-anticlique and it together with $P$ constitute a 2-coloring of $A$.
\end{proof}

\begin{prop}\label{inm4}
Every Hermitian $A \in M_4$ has a 3-coloring.
If $A$ has eigenvalues $\lambda_1 \geq \lambda_2 \geq \lambda_3 \geq \lambda_4$, then there is a $2$-coloring if and only if either $\lambda_2 = \lambda_3$ or $\lambda_1 + \lambda_4 = \lambda_2 + \lambda_3$.
\end{prop}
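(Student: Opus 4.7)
The plan is to prove the two assertions separately. For the 3-coloring: apply Proposition \ref{biganti} with $n = 4$ to obtain a rank 2 anticlique $P$ for $A$, then split $I_4 - P$ into any two rank 1 projections (trivially 1-anticliques). Together with $P$ these give a 3-coloring.

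For the 2-coloring characterization, I will diagonalize $A$ in an orthonormal eigenbasis $v_1, v_2, v_3, v_4$ for $\lambda_1, \lambda_2, \lambda_3, \lambda_4$ and argue sufficiency and necessity. Sufficiency has two branches. If $\lambda_1 + \lambda_4 = \lambda_2 + \lambda_3$, let $c$ denote their common half-sum, and take $V_1 = {\rm span}\bigl((v_1+v_4)/\sqrt{2},\, (v_2+v_3)/\sqrt{2}\bigr)$ together with $V_2 = V_1^\perp = {\rm span}\bigl((v_1-v_4)/\sqrt{2},\, (v_2-v_3)/\sqrt{2}\bigr)$; since $A$ preserves ${\rm span}(v_1,v_4)$ and ${\rm span}(v_2,v_3)$ separately, each $V_i$ compresses $A$ to $cI_2$, giving a $(2,2)$-coloring. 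If instead $\lambda_2 = \lambda_3 = \lambda$ with 2-dimensional eigenspace, pick a unit $w = \alpha v_1 + \beta v_4$ satisfying $|\alpha|^2\lambda_1 + |\beta|^2\lambda_4 = \lambda$ (possible since $\lambda_4 \leq \lambda \leq \lambda_1$); then ${\rm span}(v_2, v_3, w)$ is a rank 3 anticlique with value $\lambda$, whose rank 1 orthocomplement is trivially an anticlique. If the $\lambda$-eigenspace has dimension $\geq 3$, any rank 3 projection inside it works.

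Necessity splits on the ranks $(k, 4-k)$ of the two anticliques. The $(0,4)$ and $(4,0)$ cases force $A$ to be scalar. In the $(2,2)$ case, write $A$ in a basis adapted to the decomposition as $\left[\begin{matrix} c_1 I_2 & B \\ B^* & c_2 I_2 \end{matrix}\right]$; taking an SVD $B = U\Sigma V^*$ with $\Sigma = {\rm diag}(\sigma_1, \sigma_2)$ decouples $A$ into two $2\times 2$ Hermitian blocks with eigenvalues $\frac{c_1+c_2}{2} \pm \sqrt{\bigl(\frac{c_1-c_2}{2}\bigr)^2 + \sigma_i^2}$, so the four eigenvalues occur as two pairs symmetric about $\frac{c_1+c_2}{2}$, and sorting yields $\lambda_1 + \lambda_4 = \lambda_2 + \lambda_3$. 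In the $(1,3)$ case, let $c$ be the scalar for the rank 3 anticlique $P$; since $P(A - cI)P = 0$ forces ${\rm rank}(A - cI) \leq 2$, $c$ is an eigenvalue of $A$ of multiplicity at least 2. If the multiplicity is $\geq 3$, then $\lambda_2 = \lambda_3$ immediately; if the multiplicity is exactly 2 with eigenspace $K_c$, then ${\rm ran}(P) = K_c \oplus \mathbb{C}w$ for some unit $w \in K_c^\perp$ with $\langle w, Aw\rangle = c$, which forces $c$ to lie between the two eigenvalues of $A|_{K_c^\perp}$; this rules out the multiplicity-2 eigenvalue being at $\lambda_1 = \lambda_2$ or $\lambda_3 = \lambda_4$ unless triple equality holds, so again $\lambda_2 = \lambda_3$.

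The main subtlety I anticipate is that a rank 3 anticlique need not sit inside an eigenspace of $A$: its range may extend outside by one extra vector $w$, subject only to $\langle w, Aw\rangle = c$. Recognizing this is what makes $\lambda_2 = \lambda_3$ sufficient on its own without any hypothesis on $\lambda_1 + \lambda_4$, and conversely it is what pins the necessary location of the multiplicity-2 eigenvalue to the middle of the spectrum in the necessity argument.
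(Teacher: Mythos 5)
Your proposal is correct and follows essentially the same route as the paper: the 3-coloring via Proposition \ref{biganti}, the same explicit anticliques for sufficiency, and the same block-form-plus-SVD argument for the rank $(2,2)$ necessity case. The only divergence is in the rank $(1,3)$ necessity case, where you argue via ${\rm rank}(A-cI)\leq 2$ and the convexity constraint $\langle Aw,w\rangle=c$ on the extra vector $w\perp K_c$, while the paper instead writes out the matrix of $A-\mu I_4$ with a $3\times 3$ zero block and uses a determinant sign argument; both are valid and of comparable length.
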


\begin{proof}
Let $\{v_1, v_2, v_3, v_4\}$ be an orthonormal basis of $\mathbb{C}^4$ such that each $v_i$ is an eigenvector with eigenvalue $\lambda_i$.

For the first part, use Proposition \ref{biganti} to find a 2-anticlique $P$. Then $Q = I_4 - P$ is a rank 2 projection and we can write $Q = Q_1 + Q_2$ where $Q_1$ and $Q_2$ are rank 1 projections. As every rank 1 projection is a 1-anticlique, the projections $P$, $Q_1$, and $Q_2$ constitute a 3-coloring of $A$.

Now suppose $\lambda_2 = \lambda_3$. Then there is a convex combination $t \lambda_1 + (1-t) \lambda_4 = \lambda_2 = \lambda_3$, and the vectors $\sqrt{t}v_1 + \sqrt{1-t}v_4$, $v_2$, $v_3$ are orthonormal and span the range of a 3-anticlique $P$. The projections $P$ and $I_4 - P$ then constitute a 2-coloring of $A$. Conversely, suppose $P$ is a 3-anticlique and $PAP = \mu P$. Then relative to an orthonormal basis of $\mathbb{C}^4$ whose last three vectors span the range of $P$, the matrix $B = A - \mu I_4$ has the form $$\left[\begin{matrix} a&b&c&d\cr \bar{b}&&&\cr \bar{c}&&0&\cr \bar{d}&&&\end{matrix}\right]$$ with $a$ real. By choosing the last three basis vectors more carefully, we can ensure that $c = d = 0$. If $b = 0$ as well then $A$ has an eigenvalue of multiplicity 3 or 4 and hence $\lambda_2 = \lambda_3$; otherwise, $B$ has the eigenvalue $0$ with multiplicity 2 and, since ${\rm det}\left[\begin{matrix}a&b\cr \overline{b}&0\end{matrix}\right] = -|b|^2 < 0$, also one strictly positive and one strictly negative eigenvalue. Thus again $\lambda_2 = \lambda_3$.

Finally, suppose $\lambda_1 + \lambda_4 = \lambda_2 + \lambda_3$. Then $$\left\{\frac{1}{\sqrt{2}}(v_1 + v_4), \frac{1}{\sqrt{2}}(v_2 + v_3), \frac{1}{\sqrt{2}}(v_1 - v_4), \frac{1}{\sqrt{2}}(v_2 - v_3)\right\}$$ is an orthonormal basis whose first two and last two vectors both span 2-anticliques. Thus $A$ has a 2-coloring. Conversely, suppose $A$ has a 2-coloring consisting of two rank 2 projections. Then relative to a suitable orthonormal basis $A$ can be put in the form $$\left[\begin{matrix}\mu_1I_2&B\cr B^*&\mu_2I_2\end{matrix}\right]$$ for some $\mu_1,\mu_2 \in \mathbb{R}$ and $B \in M_2$. Using the polar decomposition of $B$, we can find unitaries $U,V \in M_2$ such that $U^*BV$ is diagonal; then conjugating by the matrix $$\left[\begin{matrix}U&0\cr 0&V\end{matrix}\right]$$ puts $A$ in the form $$\left[\begin{matrix}\mu_1&0&b_1&0\cr 0&\mu_1&0&b_2\cr \overline{b}_1&0&\mu_2&0\cr 0&\overline{b}_2&0&\mu_2\end{matrix}\right],$$ or, reordering the basis, $$\left[\begin{matrix}\mu_1&b_1&&\cr \overline{b}_1&\mu_2&&\cr &&\mu_1&b_2\cr &&\overline{b}_2&\mu_2\end{matrix}\right].$$ Thus $A$ has two eigenvalues whose sum is $\mu_1 + \mu_2$ from the upper left corner, and two more eigenvalues whose sum is $\mu_1 + \mu_2$ from the lower right corner. We conclude that $\lambda_1 + \lambda_4 = \lambda_2 + \lambda_3$.
\end{proof}

Moving now to general values of $n$, Proposition \ref{biganti} entails the following result about coloring Hermitian matrices of arbitrary size.

\begin{thm}
Every Hermitian matrix in $M_n$, $n\geq 1$, has a $(\lfloor\log_2 n\rfloor + 1)$-coloring.
\end{thm}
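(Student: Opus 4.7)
My approach is strong induction on $n$, peeling off a large anticlique via Proposition \ref{biganti} at each step and recursing on the orthogonal complement.

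First, I will dispose of the base case $n = 1$: here $A$ is automatically a scalar multiple of $I_1$, so $P_1 = I_1$ gives a $1$-coloring, matching $\lfloor \log_2 1\rfloor + 1 = 1$.

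For the inductive step with $n \geq 2$, I will apply Proposition \ref{biganti} to ${\rm span}(I_n, A)$ to obtain an anticlique $P_1$ of rank $\lceil n/2 \rceil$, set $Q = I_n - P_1$, and consider the compression $QAQ$ as a Hermitian matrix on the $m$-dimensional space ${\rm ran}(Q)$, where $m := \lfloor n/2\rfloor \geq 1$. By the inductive hypothesis, $QAQ$ admits a coloring by at most $\lfloor \log_2 m\rfloor + 1$ anticliques summing to $Q$; extending each of these by zero on ${\rm ran}(P_1)$ and adjoining $P_1$ produces projections in $M_n$ summing to $I_n$. Each extended projection still has range inside ${\rm ran}(Q)$, so the restriction of $A$ to that range coincides with the restriction of $QAQ$, confirming that it is an anticlique for $A$ as well.

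The remaining task is the arithmetic check that $(\lfloor \log_2 m\rfloor + 1) + 1 \leq \lfloor \log_2 n\rfloor + 1$, equivalently $\lfloor \log_2 \lfloor n/2\rfloor\rfloor \leq \lfloor \log_2 n\rfloor - 1$ for $n \geq 2$. Writing $2^k \leq n < 2^{k+1}$, so that $k = \lfloor \log_2 n\rfloor \geq 1$, we get $2^{k-1} \leq \lfloor n/2\rfloor \leq 2^k - 1$, so $\lfloor \log_2 \lfloor n/2\rfloor\rfloor = k - 1$ with equality throughout. There is no serious obstacle in this proof; the one point requiring care is this floor arithmetic, which works cleanly precisely because halving $n$ (rounded down) decreases $\lfloor \log_2 \cdot \rfloor$ by exactly one when $n \geq 2$.
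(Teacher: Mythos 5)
Your proof is correct and follows essentially the same route as the paper's: repeatedly peel off a $\lceil n/2\rceil$-anticlique via Proposition \ref{biganti} and recurse on the complement. The only difference is presentational --- you organize the iteration as a strong induction with an explicit verification that $\lfloor\log_2\lfloor n/2\rfloor\rfloor = \lfloor\log_2 n\rfloor - 1$ for $n \geq 2$, whereas the paper argues by comparison with the extremal case $n = 2^k - 1$; both counts agree.
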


\begin{proof}
We have $\lfloor \log_2 n\rfloor + 1 = k$ for $2^{k-1} \leq n < 2^k$. First consider the case $n = 2^k - 1 = 1 + 2 + \cdots + 2^{k-1}$. Here we apply Proposition \ref{biganti} to get a $2^{k-1}$-anticlique $P_1$, then compress to $I_n - P_1$ and get a $2^{k-2}$-anticlique $P_2$, and so on, ending with a 1-anticlique $P_k$. This yields a $k$-coloring of the original matrix.

For any $n < 2^k - 1$, follow the same procedure, applying Proposition \ref{biganti} repeatedly. At each stage the remaining dimension will be at most the remaining dimension at that stage from the case $n = 2^k - 1$, so it must terminate after at most $k$ steps, yielding a $k$-coloring of the original matrix.
\end{proof}

This bound is sharp.

\begin{thm}
For any $n \geq 1$ there is a Hermitian matrix in $M_n$ that has no $\lfloor\log_2 n\rfloor$-coloring.
\end{thm}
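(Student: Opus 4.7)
The plan is to take $A$ to be a diagonal matrix with eigenvalues $\lambda_1,\ldots,\lambda_n$ that are algebraically independent over $\mathbb{Q}$, and to show by an inductive Cauchy-interlacing argument that no $\lfloor\log_2 n\rfloor$-coloring of $A$ can exist.

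First I would establish the following Cauchy-interlacing bound: if a Hermitian $B\in M_d$ has $d$ distinct eigenvalues $\lambda_1>\cdots>\lambda_d$, then any rank-$r$ anticlique $P$ for $B$ (with $PBP=\mu P$) must satisfy $r\le\lceil d/2\rceil$. Indeed, the compression $PBP$, viewed on $\operatorname{ran}(P)$, has the single eigenvalue $\mu$ with multiplicity $r$; Cauchy interlacing forces $\lambda_r\ge\mu\ge\lambda_{d-r+1}$, and since the $\lambda_i$ are distinct this requires $r\le(d+1)/2$. Equivalently, the orthogonal complement of any anticlique has dimension at least $\lfloor d/2\rfloor$.

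Given this bound, the lower bound on the chromatic number is a straightforward induction. Suppose $A$ has a $k$-coloring by projections $P_1,\ldots,P_k$. Set $Q_i=I-P_1-\cdots-P_i$ and let $A_i=Q_iAQ_i$, regarded as a Hermitian operator on $\operatorname{ran}(Q_i)$. Then $P_{i+1}$ is an anticlique of $A_i$, and if every $A_i$ has distinct eigenvalues, the bound above gives $d_{i+1}:=\operatorname{rank}(Q_{i+1})\ge\lfloor d_i/2\rfloor$. Iterating yields $d_k\ge\lfloor n/2^k\rfloor$, which is at least $1$ when $k=\lfloor\log_2 n\rfloor$, contradicting $d_k=0$. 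So no $\lfloor\log_2 n\rfloor$-coloring exists, which combined with the preceding theorem pins the chromatic number of $A$ at exactly $\lfloor\log_2 n\rfloor+1$.

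The main obstacle is to guarantee that each intermediate compression $A_i$ has distinct eigenvalues; this is where algebraic independence of the $\lambda_j$ enters. A repeated eigenvalue of $A_i$ amounts to the vanishing of the discriminant of its characteristic polynomial, whose coefficients are rational functions of the $\lambda_j$ together with the finitely many parameters describing the anticliques $P_1,\ldots,P_i$ (points on complex Grassmannians plus scalars $\mu_j=\operatorname{tr}(P_jA)/\operatorname{rank}(P_j)$). Eliminating these auxiliary parameters would yield a nontrivial polynomial identity in the $\lambda_j$ alone, contradicting their algebraic independence. More cleanly, one may observe that the set of $A\in M_n^{\mathrm{sa}}$ for which some iterated compression has a repeated eigenvalue is a countable union (indexed by the discrete data of the rank sequences $(r_1,\ldots,r_i)$ and the multiplicity structure of the offending eigenvalue) of proper real-algebraic subvarieties, hence meager; a generic $A$ with algebraically independent eigenvalues avoids all of them. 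Making either the elimination or the dimension count fully rigorous — in particular, handling the continuous moduli of anticlique subspaces uniformly across all depths of iteration — will be the technically delicate step.
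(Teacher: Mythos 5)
Your reduction is correct as far as it goes: the interlacing bound $r\le\lceil d/2\rceil$ for an anticlique of a Hermitian matrix with $d$ distinct eigenvalues is right, and the recursion $d_{i+1}\ge\lfloor d_i/2\rfloor$ does give $d_k\ge\lfloor n/2^k\rfloor\ge 1$ for $k=\lfloor\log_2 n\rfloor$, which is the desired contradiction. But the step you defer --- guaranteeing that every iterated compression $A_i$ has distinct eigenvalues --- is the actual content of the theorem, and neither of your two sketches for closing it is sound as stated. The ``bad'' set of matrices is the set of $A$ for which \emph{there exists} some sequence of anticliques $P_1,\dots,P_i$ whose iterated compression acquires a repeated eigenvalue. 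Since the anticliques of a fixed $A$ form continuous families (cf.\ Lemma \ref{lemma1}: already in $M_3$ they are parametrized by a circle), this bad set is the \emph{projection} of a real-algebraic set fibered over a positive-dimensional parameter space; it is not a countable union of subvarieties of $M_n^{\mathrm{sa}}$, so the meagerness argument does not apply. Likewise, elimination of the continuous parameters only yields a \emph{nontrivial} polynomial condition on the $\lambda_j$ if you can first show the projection is not all of the spectrum space, i.e.\ exhibit at least one spectrum avoiding every bad locus at every depth --- which is essentially the theorem itself. Algebraic independence of the $\lambda_j$ buys you nothing until that nontriviality is established.

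The paper sidesteps this entirely by making a concrete quantitative choice, $A=\mathrm{diag}\bigl((n+1)^n,(n+1)^{n-1},\dots,n+1\bigr)$. A trace estimate shows the anticlique with the largest compression value must have rank $1$, and then inductively (via Schur--Horn) the $(i+1)$st anticlique has rank at most $D_i+1$, where $D_i$ is the cumulative rank used so far; hence $D_k\le 2^k-1<n$, and the coloring cannot exhaust $\mathbb{C}^n$. This replaces your mechanism (generic spectrum plus interlacing on the remaining dimension) with exponentially spaced eigenvalues plus a direct bound on each anticlique's rank, and it needs no genericity lemma. If you want to salvage your route you must prove the distinct-eigenvalues claim uniformly over all anticlique sequences; I do not see a short argument, and I would recommend switching to an explicit, widely spaced spectrum as the paper does.
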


\begin{proof}
Let $A \in M_n$ be the diagonal matrix with diagonal entries $(n+1)^n$, $(n+1)^{n-1}$, $\ldots$, $n+1$. The eigenvectors of $A$ are the standard basis vectors $e_i$. I will show that if $n \geq 2^k$ then $A$ has no $k$-coloring.

Suppose $P_1$, $\ldots$, $P_k$ is a $k$-coloring of $A$, so that we have $P_iAP_i = \mu_i$ for some scalars $\mu_i$. Assume that $\mu_1 \geq \cdots \geq \mu_k$. For each $i$ let $d_i$ be the rank of $P_i$, and then define $D_i = d_1 + \cdots + d_i$. Thus $D_i$ is the rank of the projection $Q_i = P_1 + \cdots + P_i$.

Since $n\cdot \mu_1 \geq d_1\mu_1 + \cdots + d_k\mu_k = {\rm tr}(A) \geq (n + 1)^n$, we have $\mu_1 \geq \frac{1}{n}(n+1)^n > (n+1)^{n-1}$. This implies that $d_1 = 1$, because if the rank of $P_1$ was larger than 1 then its range would contain a unit vector orthogonal to $e_1$. But any unit vector $v$ orthogonal to $e_1$ satisfies $\langle Av,v\rangle \leq (n+1)^{n-1}$, whereas if it belonged to the range of $P_1$ then it would satisfy $\langle Av,v\rangle = \mu_1 > (n+1)^{n-1}$. We have shown that $\mu_1 > (n+1)^{n-1}$ and $D_1 = d_1 = 1$.

We now argue inductively that $\mu_{i+1} > (n+1)^{n - D_i - 1}$ and $d_{i+1} \leq 2^i$, so that $D_{i+1} < 2^{i+1}$. By the Schur-Horn theorem, the trace of $(I - Q_i)A(I - Q_i)$ is at least the sum of the $n - D_i$ smallest eigenvalues of $A$, the largest of which is $(n+1)^{n - D_i}$. Therefore $\mu_{i + 1} \geq \frac{1}{n}(n+1)^{n - D_i} > (n+1)^{n - D_i - 1}$. It follows that $P_{i+1}$ can have rank at most $D_i + 1$, because if its rank was larger than this then its range would contain a unit vector orthogonal to the span of $e_1$, $\ldots$, $e_{D_i + 1}$, but as in the base case, this would lead to a contradiction because any such vector $v$ would have to satisfy both $\langle Av,v\rangle \leq (n+1)^{n - D_i - 1}$ (since this is the norm of the compression of $A$ to the span of $e_{D_i + 2}$, $\ldots$, $e_n$) and $\langle Av,v\rangle = \mu_{i+1} > (n+1)^{n - D_i - 1}$. Thus we have shown that $\mu_{i+1} > (n+1)^{n - D_i - 1}$ and $d_{i+1} \leq D_i + 1$, so that inductively $d_{i+1} \leq 2^{i+1}$.

Taking $i = k$, this means that $D_k < 2^k \leq n$, contradicting the assumption that $Q_k = I_n$. We conclude that there is no $k$-coloring of $A$.
\end{proof}

\section{Strongly triangle-free operator systems}

We have seen that every two-dimensional operator system is strongly triangle-free (Proposition \ref{lowdim}) and that the complete bipartite operator systems $\mathcal{K}_{1,n}$ are strongly triangle-free (Proposition \ref{1ntf}). Thus any operator system contained in $\mathcal{K}_{1,n}$ is strongly triangle-free. It is also vacuously the case that every operator system in $M_2$ is strongly triangle-free, and we had one exceptional example of a strongly triangle-free operator system in Example \ref{except}.
In this section I will show that this list exhausts the strongly triangle-free operator systems.

The key tool is a characterization of the 2-anticliques of Hermitian operators in $M_3$.

\begin{lem}\label{lemma1}
Let $A \in M_3$ be Hermitian with eigenvalues $\lambda_1$, $\lambda_2$, $\lambda_3$ and corresponding unit eigenvectors $v_1$, $v_2$, $v_3$. If $\lambda_1 = \lambda_2 = \lambda_3$ then every rank 2 projection in $M_3$ is a 2-anticlique for $A$. If exactly two eigenvalues are equal then their eigenspace is the range of the unique 2-anticlique for $A$. If all three eigenvalues are distinct then, letting $\lambda_2$ be the middle eigenvalue, there is a unique $t \in (0,1)$ such that $t \lambda_1 + (1-t)\lambda_3 = \lambda_2$, and the 2-anticliques for $A$ are parametrized by the unit circle $|z| = 1$ as the projections onto the subspaces spanned by $v_2$ and $\sqrt{t} v_1 + z\sqrt{1-t}v_3$.
\end{lem}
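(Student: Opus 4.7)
The plan is to pass to an orthonormal basis of eigenvectors, so that $A$ is diagonal with entries $\lambda_1, \lambda_2, \lambda_3$, and then analyze an arbitrary 2-anticlique $P$ with $PAP = \mu P$. The triply-degenerate case $\lambda_1 = \lambda_2 = \lambda_3$ is immediate: $A = \lambda_1 I$ and every rank 2 projection works. For the remaining cases, let $V = {\rm ran}(P)$ and let $u$ be a unit vector spanning $V^\perp$. The anticlique condition unpacks as $P(A - \mu I)P = 0$, equivalently $(A - \mu I)V \subseteq \mathbb{C}u$, i.e., in the block decomposition $\mathbb{C}^3 = V \oplus \mathbb{C}u$ the matrix $B = A - \mu I$ has a vanishing upper-left $2 \times 2$ block.

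The first substantive step is to pin down $\mu$. From the block form, expanding $\det(B)$ along any row of the zero block gives $\det(B) = 0$, and a direct computation of the sum of principal $2 \times 2$ minors of $B$ gives $-\|y\|^2 \leq 0$, where $y \in \mathbb{C}^2$ is the off-diagonal coupling vector. Comparing with the characteristic polynomial of $B$, whose roots are $\lambda_i - \mu$, I obtain $\prod_i (\lambda_i - \mu) = 0$ and $\sum_{i<j}(\lambda_i - \mu)(\lambda_j - \mu) \leq 0$. The first says $\mu$ is an eigenvalue; together with the second, a sign analysis forces $\mu = \lambda_2$ in the distinct-eigenvalue case (picking $\mu = \lambda_1$ or $\mu = \lambda_3$ makes both remaining factors have the same sign) and forces $\mu$ equal to the repeated eigenvalue in the two-equal case.

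In the two-equal case, WLOG $\lambda_1 = \lambda_2$ and $\mu = \lambda_1$, so $(A - \lambda_1 I)v = (\lambda_3 - \lambda_1)\langle v, v_3\rangle v_3$ for every $v$. The containment $(A - \lambda_1 I)V \subseteq \mathbb{C}u$ then forces $V = {\rm span}(v_1, v_2)$: either $u$ is parallel to $v_3$ (and $V = v_3^\perp$ directly), or else $\mathbb{C}u \cap \mathbb{C}v_3 = 0$, in which case every $v \in V$ is orthogonal to $v_3$ and again $V = {\rm span}(v_1, v_2)$. In the distinct-eigenvalue case, $V$ must meet $\ker(A - \lambda_2 I) = \mathbb{C}v_2$ nontrivially, so $v_2 \in V$; writing $V = {\rm span}(v_2, w)$ with $w = \alpha v_1 + \gamma v_3$ of unit norm, the unit vector $u \in \{v_2, w\}^\perp$ is (up to phase) $-\bar\gamma v_1 + \bar\alpha v_3$, and the parallelism of $(A - \lambda_2 I)w = (\lambda_1 - \lambda_2)\alpha v_1 + (\lambda_3 - \lambda_2)\gamma v_3$ with $u$ yields $(\lambda_1 - \lambda_2)|\alpha|^2 = (\lambda_2 - \lambda_3)|\gamma|^2$. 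Combined with $|\alpha|^2 + |\gamma|^2 = 1$ this pins down $|\alpha|^2 = t$ and $|\gamma|^2 = 1-t$ for the specific $t$ in the lemma statement, while the relative phase of $\alpha$ and $\gamma$ remains free and supplies the circle's worth of anticliques.

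The main obstacle is the sign argument forcing $\mu$ to be the middle eigenvalue; everything else is direct coordinate computation. One should also briefly rule out the subcases $\alpha = 0$ and $\gamma = 0$, but these are instantly incompatible with the parallelism condition because $(A - \lambda_2 I)w$ would then point along $v_3$ or $v_1$ alone while $u$ would be forced along the complementary basis vector.
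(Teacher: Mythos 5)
Your proposal is correct, and its skeleton matches the paper's: diagonalize $A$, show that the scalar $\mu$ with $PAP=\mu P$ is an eigenvalue of $A$, identify which one, and then extract the circle's worth of admissible spanning vectors; your final identity $(\lambda_1-\lambda_2)|\alpha|^2=(\lambda_2-\lambda_3)|\gamma|^2$ is just a rearrangement of the paper's convex-combination equation $\lambda_2=|a_1|^2\lambda_1+|a_3|^2\lambda_3$. The one genuinely different step is how you pin down $\mu$. The paper applies rank--nullity to the map $v\mapsto QAv$ from the two-dimensional $\mathrm{ran}(P)$ to the one-dimensional $\mathrm{ran}(Q)$ to produce an eigenvector of $A$ inside $\mathrm{ran}(P)$, and then lets the Rayleigh-quotient identity $\langle Aw,w\rangle=\lambda_2$ do double duty: it forces $\mu$ to be the middle (resp.\ repeated) eigenvalue and determines $|a_1|^2,|a_3|^2$ at the same time. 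You instead read $\det(A-\mu I)=0$ and $e_2(A-\mu I)=-\|y\|^2\le 0$ off the bordered block form and run a sign analysis on the elementary symmetric functions. Both are sound. Your symmetric-function step is, strictly speaking, a detour --- once $\mathrm{ran}(P)$ is known to contain a $\mu$-eigenvector (which you use anyway to get $v_2\in V$), the parallelism condition alone rules out $\mu$ being an extreme eigenvalue, since the two sides of $(\lambda_1-\mu)|\alpha|^2=(\mu-\lambda_3)|\gamma|^2$ would then have opposite signs unless both coefficients vanish --- but it has the mild advantage of determining $\mu$ before any basis of $\mathrm{ran}(P)$ is chosen, and it packages the two degenerate cases and the distinct case into one uniform inequality.
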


\begin{proof}
If $A$ is a scalar multiple of the identity then every rank 2 projection is trivially an anticlique. Now let $P$ be a 2-anticlique for $A$, so that $PAP = \mu P$ for some $\mu \in \mathbb{C}$, and let $Q = I_3 - P$. Then $Q$ has rank 1 and ${\rm ran}(P)$ is two-dimensional, so there exists a nonzero vector $v \in {\rm ran}(P)$ with $QAv = 0$. So $$Av = (P + Q)Av = PAPv = \mu v.$$ This shows that $\mu$ is an eigenvalue of $A$. Without loss of generality say $\lambda_2 = \mu$.

The range of $P$ must therefore be spanned by $v_2$ and a unit vector of the form $w = a_1v_1 + a_3v_3$ where $|a_1|^2 + |a_3|^2 = 1$. But $$\lambda_2 = \langle Aw,w\rangle = |a_1|^2\lambda_1 + |a_3|^2\lambda_3,$$ i.e., $\lambda_2$ is a convex combination of $\lambda_1$ and $\lambda_3$. If two of the eigenvalues are equal, the only way this can happen is if one of $a_1$ and $a_3$ is zero and the range of $P$ is the eigenspace belonging to the repeated eigenvalue. If all three eigenvalues are distinct, then $\lambda_2$ must be the middle eigenvalue and $|a_1|^2$ and $|a_3|^2$ are uniquely determined. Letting $t = |a_1|^2$ and $1-t = |a_3|^2$, the only freedom in choosing the range of $P$ is the relative phase of $a_1$ and $a_3$, i.e., we can take $w = \sqrt{t}v_1 + z\sqrt{1-t}v_3$ for
arbitrary $|z| = 1$.
\end{proof}

\begin{lem}\label{lemma2}
Let $\mathcal{V} \subset M_3$ be a strongly triangle-free operator system and let $A, B \in \mathcal{V}$ be Hermitian. Suppose $A$ is diagonal, $$A = \left[\begin{matrix}\lambda_1&&\cr &\lambda_2&\cr &&\lambda_3\end{matrix}\right],$$ with $\lambda_1 > \lambda_2 > \lambda_3$. Then $$(\lambda_2 - \lambda_3)|b_{12}|^2 = (\lambda_1 - \lambda_2)|b_{32}|^2$$ where $B = (b_{ij})$.
\end{lem}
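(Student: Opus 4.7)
The plan is to apply the strongly triangle-free hypothesis to the ambient rank 3 projection $I_3$, which must dominate a 2-anticlique for $\mathcal{V}$, and then use Lemma \ref{lemma1} to parameterize this 2-anticlique in terms of $A$. The extra constraint that it is also a 2-anticlique for $B$ will give the stated identity.

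Concretely, since $\mathcal{V} \subset M_3$ is strongly triangle-free, the rank 3 projection $I_3$ dominates some rank 2 projection $P$ with $P\mathcal{V}P = \mathbb{C}\cdot P$. In particular $P$ is a 2-anticlique both for $A$ and for $B$. Since the diagonal entries of $A$ satisfy $\lambda_1 > \lambda_2 > \lambda_3$, the eigenvectors of $A$ are $v_i = e_i$ and all three eigenvalues are distinct, so Lemma \ref{lemma1} applies: there is a unique $t \in (0,1)$ with $t\lambda_1 + (1-t)\lambda_3 = \lambda_2$, namely
\[
t = \frac{\lambda_2 - \lambda_3}{\lambda_1 - \lambda_3}, \qquad 1-t = \frac{\lambda_1 - \lambda_2}{\lambda_1 - \lambda_3},
\]
and the range of $P$ must be spanned by $e_2$ and $w = \sqrt{t}\, e_1 + z\sqrt{1-t}\, e_3$ for some $z$ with $|z| = 1$.

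Next I would compute the off-diagonal entry of $PBP$ with respect to the orthonormal basis $\{e_2, w\}$ of $\mathrm{ran}(P)$. The second component of $Bw$ is
\[
\langle Bw, e_2\rangle = \sqrt{t}\, b_{21} + z\sqrt{1-t}\, b_{23} = \sqrt{t}\,\overline{b_{12}} + z\sqrt{1-t}\,\overline{b_{32}},
\]
using $B = B^*$. Since $P$ is a 2-anticlique for $B$, $PBP$ is a scalar multiple of $P$, so this off-diagonal entry must vanish:
\[
\sqrt{t}\,\overline{b_{12}} + z\sqrt{1-t}\,\overline{b_{32}} = 0.
\]
Taking moduli (which eliminates the unimodular phase $z$) gives $\sqrt{t}\,|b_{12}| = \sqrt{1-t}\,|b_{32}|$, hence $t|b_{12}|^2 = (1-t)|b_{32}|^2$. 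Substituting the expressions for $t$ and $1-t$ and clearing the common denominator $\lambda_1 - \lambda_3$ yields exactly $(\lambda_2 - \lambda_3)|b_{12}|^2 = (\lambda_1 - \lambda_2)|b_{32}|^2$.

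There is no real obstacle here once one sees that strongly triangle-free in $M_3$ forces the existence of a global 2-anticlique for $\mathcal{V}$; the rest is bookkeeping in the basis provided by Lemma \ref{lemma1}. The only mild subtlety is noting that the modulus of the off-diagonal entry does not depend on the freedom parameter $z$, so a single choice of anticlique suffices to extract the stated relation.
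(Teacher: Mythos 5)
Your proof is correct and follows essentially the same route as the paper: invoke strong triangle-freeness to get a 2-anticlique, use Lemma \ref{lemma1} to identify its range as ${\rm span}(e_2, \sqrt{t}e_1 + z\sqrt{1-t}e_3)$ with $t = \frac{\lambda_2-\lambda_3}{\lambda_1-\lambda_3}$, and extract the identity from the vanishing of the off-diagonal entry of $PBP$. The only cosmetic difference is that you compute $\langle Bw, e_2\rangle$ rather than $\langle Be_2, w\rangle$, which are complex conjugates of one another and give the same conclusion after taking moduli.
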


\begin{proof}
Since the eigenvalues of $A$ are distinct, we are in the last case of Lemma \ref{lemma1}. Here $t = \frac{\lambda_2 - \lambda_3}{\lambda_1 - \lambda_3}$ and $1-t = \frac{\lambda_1 - \lambda_2}{\lambda_1 - \lambda_3}$. Since $\mathcal{V}$ is strongly triangle-free, it has a 2-anticlique, which by Lemma \ref{lemma1} has range spanned by $e_2$ and $v = \sqrt{t}e_1 + z\sqrt{1-t}e_3$ for some $|z| = 1$. Thus $\langle B e_2, v\rangle = 0$, i.e., $$\sqrt{t}b_{12} + \overline{z}\sqrt{1-t}b_{32} = 0.$$ This implies that $\sqrt{t}| b_{12}| = \sqrt{1-t}|b_{32}|$, or, squaring, $(\lambda_2 - \lambda_3)|b_{12}|^2 = (\lambda_1 - \lambda_2)|b_{32}|^2$.
\end{proof}

\begin{lem}\label{lemma3}
Let $\mathcal{V} \subset M_4$ be a strongly triangle-free operator system and let $A, B \in \mathcal{V}$. Suppose $A$ is diagonal, $$A = \left[\begin{matrix}\lambda_1&&&\cr &\lambda_2&&\cr &&\lambda_3&\cr &&&\lambda_4\end{matrix}\right],$$ with $\lambda_1 > \lambda_2 > \lambda_3$, and $b_{12} \neq 0$ where $B = (b_{ij})$. Then $\lambda_2 = \lambda_4$.
\end{lem}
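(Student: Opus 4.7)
The plan is to reduce Lemma~\ref{lemma3} to Lemma~\ref{lemma2} by compressing $\mathcal{V}$ to a one-parameter family of rank-$3$ subspaces that contain $e_1$ and $e_2$, and then matching low-order Taylor coefficients in the parameter. As a preliminary reduction, since $\mathcal{V}$ is self-adjoint the Hermitian and anti-Hermitian parts of $B$ both lie in $\mathcal{V}$, and because $b_{12}\neq 0$ at least one of them has nonzero $(1,2)$ entry; so I may replace $B$ and assume $B = B^*$.

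The main construction: for real $\theta$ small and arbitrary $\phi\in\mathbb{R}$, set $v(\theta,\phi) = \cos\theta\, e_3 + e^{i\phi}\sin\theta\, e_4$ and let $Q_{\theta,\phi}$ denote the rank-$3$ projection onto ${\rm span}(e_1,e_2,v(\theta,\phi))$. Because $\mathcal{V}$ is strongly triangle-free, $Q_{\theta,\phi}$ dominates a $2$-anticlique for $\mathcal{V}$, which restricts to a $2$-anticlique for the compression $Q_{\theta,\phi}\mathcal{V}Q_{\theta,\phi}$ on ${\rm ran}(Q_{\theta,\phi})$; that compression is therefore strongly triangle-free in the sense required by Lemma~\ref{lemma2}. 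In the ordered basis $\{e_1,e_2,v(\theta,\phi)\}$, the compression $QAQ$ is diagonal with entries $\lambda_1, \lambda_2, \mu_3(\theta)$ where $\mu_3(\theta) = \lambda_3\cos^2\theta + \lambda_4\sin^2\theta$, and for $\theta$ near $0$ these three numbers remain distinct and obey $\lambda_1>\lambda_2>\mu_3(\theta)$ by continuity (since $\mu_3(0)=\lambda_3<\lambda_2$). The $(1,2)$ entry of $QBQ$ in this basis is $b_{12}$, and the $(3,2)$ entry is $v(\theta,\phi)^* B e_2 = \cos\theta\,b_{32} + e^{-i\phi}\sin\theta\,b_{42}$. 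Lemma~\ref{lemma2} then yields
$$(\lambda_2 - \mu_3(\theta))\,|b_{12}|^2 \;=\; (\lambda_1 - \lambda_2)\,\bigl|\cos\theta\,b_{32} + e^{-i\phi}\sin\theta\,b_{42}\bigr|^2$$
for all $\phi$ and all $\theta$ in some neighborhood of $0$.

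I would then expand both sides in $\theta$ at $\theta=0$. The $\theta^0$ coefficient reproduces $(\lambda_2 - \lambda_3)|b_{12}|^2 = (\lambda_1 - \lambda_2)|b_{32}|^2$ (the original Lemma~\ref{lemma2} applied to the compression onto ${\rm span}(e_1,e_2,e_3)$), which in particular forces $b_{32}\neq 0$. The $\theta^1$ coefficient reads $0 = 2(\lambda_1 - \lambda_2)\,{\rm Re}(e^{-i\phi}\,\overline{b_{32}}\,b_{42})$, and since this must hold for every $\phi$ it forces $\overline{b_{32}}\,b_{42} = 0$ and hence $b_{42} = 0$. With $b_{42}=0$ in hand, the $\theta^2$ coefficient collapses to $(\lambda_4 - \lambda_3)|b_{12}|^2 = (\lambda_1 - \lambda_2)|b_{32}|^2 = (\lambda_2 - \lambda_3)|b_{12}|^2$, and since $b_{12}\neq 0$ this yields $\lambda_4 = \lambda_2$ as required.

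The only nontrivial verification is the applicability of Lemma~\ref{lemma2} to each $Q_{\theta,\phi}\mathcal{V}Q_{\theta,\phi}$: one must check that the compression really is strongly triangle-free in the $M_3$ sense (automatic once $Q_{\theta,\phi}$ dominates a $2$-anticlique for $\mathcal{V}$) and that the three diagonal entries of $QAQ$ stay distinct and correctly ordered for small $\theta$ (a continuity argument). I expect no real obstacle here; the substantive content is the choice of the two-parameter family itself, where the phase $\phi$ is needed to kill $b_{42}$ at order $\theta$ and the real parameter $\theta$ is what brings $\lambda_4$ into the picture at order $\theta^2$.
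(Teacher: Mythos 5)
Your proof is correct and follows essentially the same route as the paper: compress onto ${\rm span}(e_1,e_2,v)$ for a phase-and-angle family of vectors $v$ near $e_3$ in ${\rm span}(e_3,e_4)$, apply Lemma~\ref{lemma2}, and match low-order coefficients in the perturbation parameter (the paper uses $t=\sin^2\theta$ and extracts $b_{42}=0$ from the non-differentiability of $\sqrt{t(1-t)}$ at $t=0$ rather than from the first-order Taylor coefficient in $\theta$, but this is the same computation). Your explicit reduction to Hermitian $B$ and the check that the compression is strongly triangle-free are points the paper leaves implicit, and both are handled correctly.
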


\begin{proof}
For sufficiently small $\epsilon > 0$, every $t \in [0,\epsilon]$ has the property that $\lambda_2 > \langle Av, v\rangle$ for $v = \sqrt{t}e_4 + z\sqrt{1-t}e_3$ (with $|z| = 1$, but otherwise arbitrary).
Compress $\mathcal{V}$ onto the three-dimensional subspace of $\mathbb{C}^4$ spanned by $e_1$, $e_2$, and $v$. Then $A$ compresses to a diagonal matrix with diagonal entries $\lambda_1$, $\lambda_2$, and $\langle Av,v\rangle = t\lambda_4 + (1-t)\lambda_3$ and Lemma \ref{lemma2} yields  \begin{equation}\big(\lambda_2 - (t\lambda_4 + (1-t)\lambda_3)\big)|b_{12}|^2 = (\lambda_1 - \lambda_2)|\sqrt{t}b_{42} + z\sqrt{1-t}b_{32}|^2\tag{$\ast$}\end{equation} for all $t \in [0,\epsilon]$ and $|z| = 1$. Squaring out the right side yields $$|\sqrt{t}b_{42} + z\sqrt{1-t}b_{32}|^2 = t|b_{42}|^2 + 2\sqrt{t(1-t)}\cdot{\rm Re}(\overline{z} b_{42}\overline{b}_{32}) + (1-t)|b_{32}|^2.$$  Now every term in ($\ast$) besides $2\sqrt{t(1-t)}\cdot{\rm Re}(\overline{z} b_{42}\overline{b}_{32})$ is differentiable at $t = 0$, so we must have ${\rm Re}(\overline{z} b_{42}\overline{b}_{32}) = 0$; since $|z| = 1$ was arbitrary this means that either $b_{32} = 0$ or $b_{42} = 0$. If $b_{32} = 0$ then putting $t = 0$ in ($\ast$) yields $(\lambda_2 - \lambda_3)|b_{12}|^2 = 0$, but this is impossible since $b_{12} \neq 0$ and $\lambda_2 > \lambda_3$. So $b_{42} = 0$, and then ($\ast$) simplifies to $$((\lambda_2 - \lambda_3) - (\lambda_4 - \lambda_3)t)|b_{12}|^2 = (\lambda_1 - \lambda_2)(1 - t)|b_{32}|^2,$$ but since $b_{12} \neq 0$ and the right side is a constant times $1 - t$, that forces $\lambda_2 - \lambda_3 = \lambda_4 - \lambda_3$, which implies that $\lambda_2 = \lambda_4$.
\end{proof}

\begin{lem}\label{lemma4}
Let $\mathcal{V} \subset M_4$ be a triangle-free operator system and let $A, B \in \mathcal{V}$. Suppose $A$ is diagonal, $$A = \left[\begin{matrix}\lambda_1&&&\cr &\lambda_2&&\cr &&\lambda_3&\cr &&&\lambda_4\end{matrix}\right],$$ with $\lambda_1 \geq \lambda_2 \geq \lambda_3 > \lambda_4$, $\lambda_1 \neq \lambda_3$, and $b_{14} \neq 0$ where $B = (b_{ij})$. Then $\lambda_2 = \lambda_3$.
\end{lem}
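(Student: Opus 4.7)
The plan is to mimic the proof of Lemma~\ref{lemma3}, now adapted so that the nonzero entry $b_{14}$ plays the role that $b_{12}$ played there. Assume for contradiction that $\lambda_2 > \lambda_3$; combined with the assumed ordering this yields $\lambda_1 \geq \lambda_2 > \lambda_3 > \lambda_4$, so in particular $\lambda_1 > \lambda_3$ and $\lambda_2 > \lambda_4$. These strict inequalities are what will make the forthcoming 3-dimensional compression have three strictly decreasing eigenvalues for small positive $t$; the hypothesis $\lambda_1 \neq \lambda_3$ will be crucial for a linear-independence argument below. (One may also assume $B$ is Hermitian by replacing it with its Hermitian or anti-Hermitian part, at least one of which still has nonzero $(1,4)$ entry.)

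Compress $\mathcal{V}$ to the subspace with orthonormal basis $\{e_1,\,u(t,z),\,e_3\}$, where $u(t,z) = \sqrt{t}\,e_4 + z\sqrt{1-t}\,e_2$ for $t \in [0,\epsilon]$ and $|z|=1$. In this basis the compression of $A$ is diagonal with entries $\lambda_1,\; t\lambda_4+(1-t)\lambda_2,\; \lambda_3$, strictly decreasing for small $t>0$. The compression of a strongly triangle-free operator system to any rank-$3$ projection is again strongly triangle-free in $M_3$ (the rank-$2$ anticlique dominated by the rank-$3$ projection remains a $2$-anticlique after compression), so Lemma~\ref{lemma2} applies and produces the identity
\[
f(t)\,\bigl|\sqrt{t}\,b_{14} + z\sqrt{1-t}\,b_{12}\bigr|^2 \;=\; h(t)\,\bigl|\sqrt{t}\,b_{34} + z\sqrt{1-t}\,b_{32}\bigr|^2 \qquad (\dagger)
\]
for all small $t>0$ and all $|z|=1$, where $f(t) = (\lambda_2-\lambda_3) + t(\lambda_4-\lambda_2)$ and $h(t) = (\lambda_1-\lambda_2) + t(\lambda_2-\lambda_4)$. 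The decisive structural fact is that $f$ and $h$ are linearly independent as affine polynomials in $t$, since the corresponding $2\times 2$ coefficient determinant equals $(\lambda_2 - \lambda_4)(\lambda_1 - \lambda_3) \neq 0$.

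The contradiction is extracted from $(\dagger)$ by separating its $z$-dependent and $z$-independent parts. The $z$-dependent contribution is a $2\sqrt{t(1-t)}\,\mathrm{Re}(\bar z\,\cdot\,)$ term whose coefficients must match for every unit $z$; combined with the linear independence of $f,h$ this forces $b_{14}\overline{b_{12}} = 0 = b_{34}\overline{b_{32}}$, and since $b_{14}\neq 0$ we get $b_{12}=0$. The $z$-independent part then becomes a polynomial identity in $t$; matching its $t^2$ coefficient (using $\lambda_2 \neq \lambda_4$) yields the key relation $|b_{32}|^2 = |b_{14}|^2 + |b_{34}|^2$. The main obstacle I anticipate is handling the degenerate subcase $\lambda_1 = \lambda_2$ uniformly with the generic one. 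In the generic subcase the $t^0$ coefficient immediately gives $b_{32}=0$, whereupon $|b_{32}|^2 = |b_{14}|^2 + |b_{34}|^2$ forces $b_{14}=0$, contradicting the hypothesis. In the degenerate subcase the $t^0$ coefficient is vacuous, but combining $b_{34}\overline{b_{32}}=0$ with the same relation forces $b_{34}=0$ and $|b_{32}|^2 = |b_{14}|^2$; the $t^1$ coefficient then collapses to $\lambda_3 = \lambda_4$, contradicting $\lambda_3 > \lambda_4$.
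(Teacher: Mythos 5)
Your proof is correct, but it takes a genuinely different route from the paper's. The paper makes a single non-perturbative choice: with $t = (\lambda_3-\lambda_4)/(\lambda_1-\lambda_4)$ fixed, the vectors $v^{\pm} = \sqrt{t}\,e_1 \pm z\sqrt{1-t}\,e_4$ satisfy $\langle Av^{\pm},v^{\pm}\rangle = \lambda_3$, so under the contradiction hypothesis $\lambda_2 \neq \lambda_3$ the compression of $A$ to ${\rm span}(e_2,e_3,v^{\pm})$ has $\lambda_3$ as a doubled eigenvalue and hence, by Lemma \ref{lemma1}, a \emph{unique} $2$-anticlique, with range ${\rm span}(e_3,v^{\pm})$; this must then be an anticlique for $B$, and comparing $\langle Bv^{+},v^{+}\rangle$ with $\langle Bv^{-},v^{-}\rangle$ kills $b_{14}$ in two lines, with no appeal to Lemma \ref{lemma2} and no case analysis. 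You instead perturb in the $e_2$--$e_4$ plane so that all three compressed eigenvalues stay distinct, and rerun the Lemma \ref{lemma3} machinery (Lemma \ref{lemma2} plus separation of the $z$-dependent part and coefficient-matching in $t$). I checked the computations: the determinant $(\lambda_2-\lambda_4)(\lambda_1-\lambda_3)$ is nonzero under the contradiction hypothesis (note that $\lambda_1 > \lambda_3$ already follows from $\lambda_1 \geq \lambda_2 > \lambda_3$, so it is this derived inequality rather than the stated hypothesis that you use), the $t^2$ coefficient does give $|b_{32}|^2 = |b_{14}|^2 + |b_{34}|^2$, and both subcases close as you describe. Your argument is longer and needs the $\lambda_1 = \lambda_2$ split, but it has the virtue of being a uniform continuation of the Lemma \ref{lemma3} technique rather than a separate trick. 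One remark applying equally to both proofs: the lemma is stated for ``triangle-free'' $\mathcal{V}$, yet both arguments need the chosen rank-$3$ compression to \emph{possess} a $2$-anticlique, i.e., both really use ``strongly triangle-free'' (which is the hypothesis in force when the lemma is invoked in Lemma \ref{lemma7}); you were right to assume the stronger property explicitly.
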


\begin{proof}
Let $|z| = 1$ and define $v^\pm = \sqrt{t}e_1 \pm z\sqrt{1-t}e_4$ where $t = \frac{\lambda_3 - \lambda_4}{\lambda_1 - \lambda_4} \in (0,1)$. Then $\langle Av^\pm,v^\pm\rangle = \lambda_3$, so if $\lambda_2 \neq \lambda_3$ then the compression of $A$ to ${\rm span}(e_2, e_3, v^\pm)$ has the eigenvalue $\lambda_3$ with multiplicity exactly 2. By Lemma \ref{lemma1}, this compression has a unique 2-anticlique, with range ${\rm span}(e_3,v^\pm)$, and therefore this must be a 2-anticlique for $B$ as well. Thus $$\langle Bv^+, v^+\rangle = \langle Be_3,e_3\rangle = \langle Bv^-,v^-\rangle,$$ but evaluating the left and right sides of this yields $$2\sqrt{t(1-t)}{\rm Re}\, \overline{z}b_{14} = -2\sqrt{t(1-t)}{\rm Re}\, \overline{z}b_{14},$$ contradicting the assumption that $b_{14} \neq 0$ (since $z$ was arbitrary with $|z| =1$, and $t \in (0,1)$). We conclude that $\lambda_2 = \lambda_3$.
\end{proof}

\begin{lem}\label{lemma5}
Let $\mathcal{V} \subset M_n$ be a strongly triangle-free operator system, $n \geq 3$, and suppose $e_1e_1^* \in \mathcal{V}$. Then $\mathcal{V} \subseteq \mathcal{K}_{1,n-1}$.
\end{lem}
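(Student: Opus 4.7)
The plan is to show directly that every $B \in \mathcal{V}$ has the form required for membership in $\mathcal{K}_{1,n-1}$: in the standard basis, the lower-right $(n-1) \times (n-1)$ block of $B$ must be a scalar multiple of $I_{n-1}$. Equivalently, for every pair of distinct indices $i, j \in \{2, \ldots, n\}$, I need to show that $b_{ii} = b_{jj}$ and $b_{ij} = 0$.

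The key observation is that the hypothesis $e_1 e_1^* \in \mathcal{V}$ pins down the possible 2-anticliques of $\mathcal{V}$ very strictly, via the middle case of Lemma \ref{lemma1}. Fix distinct $i, j \in \{2, \ldots, n\}$ and let $Q$ denote the rank 3 projection onto ${\rm span}(e_1, e_i, e_j)$. Since $\mathcal{V}$ is strongly triangle-free, $Q$ dominates some 2-anticlique $P$ for $\mathcal{V}$. Viewing $P$ as a rank 2 projection on ${\rm ran}(Q) \cong \mathbb{C}^3$, it is in particular a 2-anticlique for the compression of $e_1 e_1^*$; but in the orthonormal basis $\{e_1, e_i, e_j\}$ that compression is simply ${\rm diag}(1, 0, 0)$, which has the doubly-repeated eigenvalue $0$ with eigenspace ${\rm span}(e_i, e_j)$. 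Lemma \ref{lemma1} then forces ${\rm ran}(P) = {\rm span}(e_i, e_j)$.

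Since $P$ is a 2-anticlique for all of $\mathcal{V}$, for every $B \in \mathcal{V}$ the compression $PBP$ is a scalar multiple of $P$, i.e., the restriction of $B$ to ${\rm span}(e_i, e_j)$ is a scalar multiple of $I_2$. Reading off entries, $b_{ii} = b_{jj}$ and $b_{ij} = 0$. Letting $i$ and $j$ range over all distinct pairs in $\{2, \ldots, n\}$ then gives $B \in \mathcal{K}_{1,n-1}$. The argument is surprisingly short: there is no real technical obstacle once one notices that the doubly-repeated eigenvalue of the compressed $e_1 e_1^*$ removes the usual circle's-worth of phase ambiguity in Lemma \ref{lemma1}, leaving a single 2-anticlique whose range is exactly what is needed to extract the desired algebraic constraints on $B$.
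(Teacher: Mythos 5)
Your proof is correct and follows essentially the same route as the paper's: use strong triangle-freeness to get a 2-anticlique under the projection onto ${\rm span}(e_1,e_i,e_j)$, invoke the repeated-eigenvalue case of Lemma \ref{lemma1} applied to the compression of $e_1e_1^*$ to force its range to be ${\rm span}(e_i,e_j)$, and then let $i,j$ vary. The paper phrases the final step as ``${\rm span}(e_2,\ldots,e_n)$ is an anticlique for $\mathcal{V}$'' while you read off the entry conditions $b_{ii}=b_{jj}$, $b_{ij}=0$ directly, but these are the same observation.
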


\begin{proof}
For any $2\leq i < j\leq n$, the compression of $e_1e_1^*$ to ${\rm span}(e_1, e_i, e_j)$ has a unique (Lemma \ref{lemma1}) 2-anticlique with range ${\rm span}(e_i,e_j)$, so ${\rm span}(e_i,e_j)$ must be an anticlique for $\mathcal{V}$. This easily implies that ${\rm span}(e_2, \ldots, e_n)$ is an anticlique for $\mathcal{V}$, so $\mathcal{V} \subseteq \mathcal{K}_{1,n-1}$.
\end{proof}

\begin{lem}\label{lemma6}
Let $\mathcal{V} \subset M_n$ be a strongly triangle-free operator system with ${\rm dim}(\mathcal{V}) \geq 3$ and let $A \in \mathcal{V}$ be Hermitian. Suppose $A$ has exactly two eigenvalues, both with multiplicity $\geq 2$. Then $n = 4$ and $\mathcal{V}$ is the operator system from Example \ref{except}, relative to some choice of basis of $\mathbb{C}^4$.
\end{lem}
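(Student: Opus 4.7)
The strategy is to show that the rigidity of being strongly triangle-free forces $\mathcal{V}$ to equal the operator system of Example \ref{except}, up to a change of basis. First, since $I_n \in \mathcal{V}$ I may replace $A$ by $aA + bI_n$ for suitable scalars $a,b$ and assume $A = {\rm diag}(1, \ldots, 1, 0, \ldots, 0)$ with $m_1$ ones and $m_2$ zeros, where $m_1, m_2 \geq 2$ and $n = m_1 + m_2$. Because ${\rm dim}(\mathcal{V}) \geq 3$ there is a Hermitian $B \in \mathcal{V}$ with $I_n, A, B$ linearly independent, and Proposition \ref{noncom} then guarantees that $B$ does not commute with $A$, so in block form relative to the eigenspaces of $A$ the off-diagonal block $B_{12}$ is nonzero.

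Second, I would use Lemma \ref{lemma1} to force the diagonal blocks of $B$ to be scalar. For any $i \neq j$ both lying in the first eigenspace of $A$ and any $k$ in the second, the compression of $A$ to ${\rm span}(e_i, e_j, e_k)$ is ${\rm diag}(1, 1, 0)$; by Lemma \ref{lemma1} this has the projection onto ${\rm span}(e_i, e_j)$ as its unique $2$-anticlique, and strong triangle-freeness forces $B$ to be scalar on ${\rm span}(e_i, e_j)$, i.e.\ $b_{ii} = b_{jj}$ and $b_{ij} = 0$. Running over all such triples and the analogous ones with two indices in the second eigenspace yields $B_{11} = c_1 I_{m_1}$ and $B_{22} = c_2 I_{m_2}$, so after replacing $B$ by $B - c_2 I_n - (c_1 - c_2) A$ I may assume $B_{11} = B_{22} = 0$, so $B$ has purely off-diagonal block form.

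The main step is to show $n = 4$. For arbitrary orthonormal vectors $u, u_1$ in the first eigenspace and $w, u_2$ in the second, and for $\alpha, \beta > 0$ with $\alpha^2 + \beta^2 = 1$, I compress $\mathcal{V}$ to the three-dimensional subspace ${\rm span}(u, \alpha u_1 + \beta u_2, w)$. A direct computation shows that the compressed $A$ is diagonal with three distinct eigenvalues $1, \alpha^2, 0$, so Lemma \ref{lemma2} applies; after the $\alpha, \beta$ dependence cancels, it reduces to the clean identity $|u^* B_{12} u_2| = |u_1^* B_{12} w|$, valid for all such orthonormal pairs. Feeding the singular value decomposition of $B_{12}$ into this identity and varying the pairs among singular vectors yields three conclusions: all nonzero singular values of $B_{12}$ coincide; the rank of $B_{12}$ is at most $2$, with rank $1$ ruled out by choosing pairs so that one side of the identity sees a nonzero singular value while the other sees zero; and if either $m_1 \geq 3$ or $m_2 \geq 3$ an analogous index-mismatch argument forces $B_{12} = 0$, contradicting the choice of $B$. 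Hence $n = 4$, $m_1 = m_2 = 2$, and $B_{12} = \sigma U$ for some $\sigma > 0$ and $U \in U(2)$.

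Finally, I change orthonormal bases inside each eigenspace of $A$ using the singular vectors of $B_{12}$; this leaves $A$ unchanged and transforms $B$ to $\sigma F$, where $F = \left[\begin{matrix} 0 & I_2 \cr I_2 & 0\end{matrix}\right]$. To close the argument, let $C \in \mathcal{V}$ be arbitrary and write $C = C_R + i C_I$ where $C_R, C_I$ are Hermitian elements of $\mathcal{V}$ obtained from the closure under adjoint. The previous steps applied to $C_R$ and $C_I$ force each of them to have scalar diagonal blocks and an off-diagonal block that is a scalar multiple of a unitary. Applying the same arguments to $\sigma F + \epsilon C_R \in \mathcal{V}$ for various $\epsilon \in \mathbb{C}$ forces $\sigma I_2 + \epsilon (C_R)_{12}$ also to be a scalar multiple of a unitary; diagonalizing $(C_R)_{12}$ and taking $\epsilon$ both real and imaginary shows this is only possible when $(C_R)_{12}$ is itself a scalar multiple of $I_2$, and similarly for $C_I$. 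Thus every element of $\mathcal{V}$ lies in ${\rm span}(I_4, A, F)$, which is precisely Example \ref{except}. The main obstacle is the singular value analysis in the third step, where one must choose the pairs $u, u_1, w, u_2$ carefully to extract each of the three structural consequences from a single magnitude identity.
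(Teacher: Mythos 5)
Your proof is correct and follows the same overall strategy as the paper's: normalize $A$, use Lemma \ref{lemma1} to force the diagonal blocks of any non-commuting Hermitian $B$ to be scalar, use Lemma \ref{lemma2} on a three-dimensional compression mixing the two eigenspaces to control the off-diagonal block, and finally show that any further element of $\mathcal{V}$ is forced into ${\rm span}(I_4,A,B)$. Where you genuinely differ is the middle step: the paper works out only the case $m_1=m_2=2$, via the single compression onto ${\rm span}(e_1,\tfrac{1}{\sqrt{2}}(e_2+e_3),e_4)$, and disposes of higher multiplicities by a separate reduction (compressing to subspaces $E_1'+E_2'$ on which $B$ fails to act unitarily and citing the $4\times 4$ analysis). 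You instead extract from Lemma \ref{lemma2} the uniform identity $|u^*B_{12}u_2|=|u_1^*B_{12}w|$ and read off all three structural conclusions (equal singular values, rank exactly $2$, $m_1=m_2=2$) from that one identity; your singular-vector case analysis checks out, and this is a cleaner, more self-contained treatment of the multiplicity $\geq 3$ case.

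One point in your final step deserves attention --- and the same issue is latent in the paper's phrase ``any linear combination of it with $B$.'' To rule out $(C_R)_{12}={\rm diag}(e^{i\theta},e^{-i\theta})$ you genuinely need $\epsilon$ purely imaginary: the real-$\epsilon$ constraints alone are satisfied by that matrix, since ${\rm diag}(s+te^{i\theta},\,s+te^{-i\theta})$ is a scalar multiple of a unitary for every real $s,t$. But for imaginary $\epsilon$ the matrix $\sigma F+\epsilon C_R$ is not Hermitian, whereas your steps establishing scalar diagonal blocks and the magnitude identity (and Lemma \ref{lemma2} as stated) are formulated for Hermitian elements. The fix is to note that Hermiticity of the second matrix is never actually used: an anticlique $P$ for $\mathcal{V}$ satisfies $PGP=\mu P$ for \emph{every} $G\in\mathcal{V}$, so the scalar-block conclusion and the identity $|u^*G_{12}u_2|=|u_1^*G_{12}w|$ hold for arbitrary $G\in\mathcal{V}$, Hermitian or not. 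With that one remark added, your argument is complete.
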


\begin{proof}
First take the case where $n = 4$ and both eigenvalues of $A$ have multiplicity exactly 2. By choosing a basis that diagonalizes $A$, subtracting a multiple of $I_4$, and scaling, we can assume $A$ has the form $$\left[\begin{matrix}1&&&\cr &1&&\cr &&0&\cr &&&0\end{matrix}\right].$$

Since ${\rm dim}(\mathcal{V}) \geq 3$, according to Proposition \ref{noncom} there is a Hermitian matrix $B \in \mathcal{V}$ that does not commute with $A$. But the compression of $A$ to ${\rm span}(e_1,e_2,e_3)$ has three eigenvalues, exactly two of which are equal, so by Lemma \ref{lemma1} there is a unique anticlique for $A$, namely the projection onto ${\rm span}(e_1,e_2)$, whose range is contained in this span. This must therefore be an anticlique for $\mathcal{V}$. By similar reasoning, the projection onto ${\rm span}(e_3,e_4)$ is also an anticlique for $\mathcal{V}$. In particular, these are both 2-anticliques for $B$. Thus, by subtracting a linear combination of $I_4$ and $A$ from $B$, we can assume that its compressions to ${\rm span}(e_1,e_2)$ and ${\rm span}(e_3,e_4)$ are both zero.

As in the proof of Proposition \ref{inm4} we can now find unitaries $U,V \in M_2$ such that conjugating by the matrix $$\left[\begin{matrix}U&0\cr 0&V\end{matrix}\right]$$ puts $B$ in the form $$\left[\begin{matrix}&&a&\cr &&&b\cr a&&&\cr &b&&\end{matrix}\right]$$ with $a,b \geq 0$. Without loss of generality $a \neq 0$, and by scaling we can assume $a = 1$. Now let $Q$ be the projection onto ${\rm span}(e_1, \frac{1}{\sqrt{2}}(e_2 + e_3), e_4)$, so that $$QAQ = \left[\begin{matrix}1&&\cr &\frac{1}{2}&\cr &&0\end{matrix}\right]$$ and $$QBQ = \left[\begin{matrix}0&\frac{1}{\sqrt{2}}&0\cr \frac{1}{\sqrt{2}}&0&\frac{1}{\sqrt{2}}b\cr 0&\frac{1}{\sqrt{2}}b&0\end{matrix}\right].$$ Lemma \ref{lemma2} now shows that $b = 1$. Thus $\mathcal{V}$ contains the operator system from Example \ref{except}. But the argument about $B$ could now be repeated for any Hermitian $C \in \mathcal{V}$ that does not commute with $A$ to show that after subtracting a linear combination of $I_4$ and $A$ from $C$, relative to the basis which we have been using for $A$ and $B$, the matrix for $C$ must contain a scalar multiple of a $2\times 2$ unitary in its upper right corner. This will be true of any linear combination of it with $B$, but the only $2\times 2$ matrices all of whose linear combinations with $I_2$ are scalar multiples of a unitary are scalar multiples of $I_2$. Thus $C$ is, after having subtracted something in ${\rm span}(I_4,A)$, a scalar multiple of $B$. So in fact $\mathcal{V}$ must equal the operator system from example \ref{except}.

That takes care of the case where both eigenvalues of $A$ have multiplicity exactly 2. If either of them has strictly larger multiplicity, i.e., if at least one of the eigenspaces $E_1$ and $E_2$ has dimension at least 3, then we can assume the eigenvalues of $A$ are $0$ and $1$ and find a Hermitian $B \in \mathcal{V}$ which does not commute with $A$ and two-dimensional subspaces $E_1'$ of $E_1$ and $E_2'$ of $E_2$ such that $B$ does not take $E_2'$ unitarily onto $E_1'$. The preceding analysis of the $4\times 4$ case now shows that the compression of $\mathcal{V}$ to $E_1' + E_2'$ is not strongly triangle-free, and therefore $\mathcal{V}$ could not have been strongly triangle-free either. We conclude that both eigenvalues of $A$ must have had multiplicity exactly 2.
\end{proof}

\begin{lem}\label{lemma7}
Let $\mathcal{V} \subset M_n$ be a strongly triangle-free operator system with ${\rm dim}(\mathcal{V}) \geq 3$. Then every Hermitian $A \in \mathcal{V}$ has at most three eigenvalues. If it has exactly three eigenvalues, then only the middle eigenvalue can have multiplicity $\geq 2$.
\end{lem}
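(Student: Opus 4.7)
The plan is to fix a Hermitian $A \in \mathcal{V}$, produce a Hermitian $B \in \mathcal{V}$ whose matrix in the eigenbasis of $A$ has a nonzero entry between two different eigenspaces, and then compress to a 4-dimensional subspace on which Lemma \ref{lemma3} or Lemma \ref{lemma4} can be invoked. Each such invocation will yield an equality between two eigenvalues of $A$ that we arrange in advance to be distinct, producing the desired contradiction.

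Two observations underpin the whole argument. First, strong triangle-freeness passes to compressions: if $W \subseteq \mathbb{C}^n$ is any subspace and $Q$ is the projection onto $W$, then a 2-anticlique for $\mathcal{V}$ with range in $W$ is automatically a 2-anticlique for $Q\mathcal{V}Q$, so $Q\mathcal{V}Q$ is strongly triangle-free whenever $\mathcal{V}$ is. Second, since ${\rm dim}(\mathcal{V}) \geq 3$, there is some Hermitian $B \in \mathcal{V}$ not in ${\rm span}(I_n, A)$, and by Proposition \ref{noncom} (in contrapositive form) such a $B$ cannot commute with $A$; hence in the eigenbasis of $A$ there is a pair of unit eigenvectors $v, v'$ lying in distinct eigenspaces with $\langle Bv, v'\rangle \neq 0$.

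For the first claim, suppose $A$ has distinct eigenvalues $\mu_1 > \mu_2 > \cdots > \mu_m$ with $m \geq 4$, and choose $v \in E_{\mu_i}$, $v' \in E_{\mu_j}$ with $i < j$ as above. If $j < m$, compress to ${\rm span}(v, v', w, w')$ with $w \in E_{\mu_m}$ and $w' \in E_{\mu_\ell}$ for any $\ell \notin \{i,j,m\}$ (available since $m \geq 4$); the eigenvalues $\mu_i > \mu_j > \mu_m$ at positions $1,2,3$ meet the hypotheses of Lemma \ref{lemma3}, whose conclusion forces $\mu_j = \mu_\ell$, a contradiction. If $j = m$ and $i > 1$, the same argument applied to $-A$ (whose eigenvalues appear in reversed order) produces $-\mu_i = -\mu_\ell$ for a suitable $\ell \neq i$, again a contradiction. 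The remaining case $(i,j) = (1,m)$ is handled by Lemma \ref{lemma4}: place $v$ at position 1, $v'$ at position 4, and eigenvectors for $\mu_2$ and $\mu_3$ at positions 2 and 3; the hypotheses $\mu_1 \geq \mu_2 \geq \mu_3 > \mu_m$ and $\mu_1 \neq \mu_3$ are satisfied, and the conclusion $\lambda_2 = \lambda_3$ forces $\mu_2 = \mu_3$.

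For the second claim, suppose $A$ has three distinct eigenvalues $\mu_1 > \mu_2 > \mu_3$; after possibly replacing $A$ with $-A$, we may assume ${\rm dim}\,E_{\mu_1} \geq 2$. Choose $v, v'$ as above, in eigenspaces $E_{\mu_i}$ and $E_{\mu_j}$ with $i < j$. If $(i,j) = (1,2)$, apply Lemma \ref{lemma3} to the compression to ${\rm span}(v, v', w, w')$ with $w \in E_{\mu_3}$ and $w' \in E_{\mu_1}$ orthogonal to $v$ (available by the multiplicity assumption); the conclusion $\lambda_2 = \lambda_4$ becomes $\mu_2 = \mu_1$. If $(i,j) = (1,3)$, apply Lemma \ref{lemma4} with $v$ at position 1, a second $E_{\mu_1}$ eigenvector at position 2, an $E_{\mu_2}$ eigenvector at position 3, and $v'$ at position 4; the conclusion gives $\mu_1 = \mu_2$. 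Finally, if $(i,j) = (2,3)$, apply Lemma \ref{lemma3} to $-A$ with $v'$ at position 1, $v$ at position 2, and two orthogonal eigenvectors of $E_{\mu_1}$ at positions 3 and 4; the conclusion becomes $-\mu_2 = -\mu_1$. Each case yields a contradiction, so neither $\mu_1$ nor (by the $A \mapsto -A$ symmetry) $\mu_3$ can be repeated.

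The bookkeeping of eigenvalue orderings is the main obstacle: Lemma \ref{lemma3} demands strictly decreasing $\lambda_1 > \lambda_2 > \lambda_3$ and Lemma \ref{lemma4} demands $\lambda_3 > \lambda_4$ strictly, which means that whenever the nonzero off-diagonal entry of $B$ touches the smallest eigenvalue of $A$ we must either flip to $-A$ or move that eigenvalue to position 4 of a Lemma \ref{lemma4} compression. The multiplicity hypothesis on $\mu_1$ in the second claim and the abundance of eigenvalues ($m \geq 4$) in the first are exactly what allow us to fill the ``free'' slot in each compression with an eigenvector whose eigenvalue converts the lemma's conclusion into a false equality.
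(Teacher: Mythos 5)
Your proposal is correct and follows essentially the same route as the paper: obtain a non-commuting Hermitian $B$ via Proposition \ref{noncom}, locate a nonzero matrix entry of $B$ between two eigenspaces of $A$, compress to a suitable four-dimensional subspace (using that strong triangle-freeness passes to compressions), and invoke Lemma \ref{lemma3} or Lemma \ref{lemma4} (possibly for $-A$) to force a false eigenvalue equality. Your case analysis by the index pair $(i,j)$ is just a more explicit organization of the paper's terser argument, and all cases check out.
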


\begin{proof}
Let $A \in \mathcal{V}$ be Hermitian and suppose $A$ has at least four distinct eigenvalues. Since ${\rm dim}(\mathcal{V}) \geq 3$, by Proposition \ref{noncom} there exists a Hermitian matrix $B \in \mathcal{V}$ which does not commute with $A$. There must then exist eigenvectors $v$ and $w$ of $A$ belonging to distinct eigenvalues which satisfy $\langle Bv,w\rangle \neq 0$.

By compressing onto the span of exactly four eigenspaces of $A$, we can assume that $A$ has exactly four eigenvalues $\lambda_1 > \lambda_2 > \lambda_3 > \lambda_4$. A moment's thought shows that Lemma \ref{lemma3} (possibly applied to $-A$ in place of $A$) forbids either $v$ or $w$ from belonging to the $\lambda_2$ or $\lambda_3$ eigenspaces. But this puts us in the situation of Lemma \ref{lemma4}, which contradicts the distinctness of $\lambda_2$ and $\lambda_3$. We conclude that $A$ could not have had more than three distinct eigenvalues.

There still exist $v$ and $w$ belonging to distinct eigenvalues of $A$ such that $\langle Bv,w\rangle \neq 0$. If $A$ has exactly three eigenvalues, $\lambda_1 > \lambda_2 > \lambda_3$, and either $v$ or $w$ belongs to the $\lambda_2$ eigenspace, then Lemma \ref{lemma3} entails that $\lambda_1$ and $\lambda_3$ have no multiplicity. If they belong to the $\lambda_1$ and $\lambda_3$ eigenspaces, then Lemma \ref{lemma4} yields the same conclusion. So only the middle eigenvalue can have multiplicity $\geq 2$.
\end{proof}

\begin{thm}\label{maintheorem}
Let $\mathcal{V} \subseteq M_n$ be an operator system. Then $\mathcal{V}$ is strongly triangle-free if and only if one of the following holds:
\begin{enumerate}
\item $n = 2$

\item ${\rm dim}(\mathcal{V}) = 2$

\item $n = 4$ and $\mathcal{V}$ is the operator system from Example \ref{except}, relative to some choice of basis of $\mathbb{C}^4$

\item $\mathcal{V} \subseteq \mathcal{K}_{1,n-1}$, relative to some choice of basis of $\mathbb{C}^n$.
\end{enumerate}
\end{thm}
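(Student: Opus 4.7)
The ``if'' direction is bookkeeping. Item (1) is vacuous (no rank-$3$ projections exist in $M_2$); (2) is Proposition~\ref{lowdim}; (3) was checked in Example~\ref{except}; and (4) follows from Proposition~\ref{1ntf} together with the immediate observation that operator subsystems of strongly triangle-free systems inherit the property (any $2$-anticlique for the ambient system is an anticlique for the subsystem).

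For the ``only if'' direction, suppose $\mathcal{V}$ is strongly triangle-free. If $n \leq 2$ or $\mathrm{dim}(\mathcal{V}) \leq 2$ we land in (1) or (2); assume otherwise. By Lemma~\ref{lemma7}, every Hermitian element of $\mathcal{V}$ has at most three distinct eigenvalues, and if exactly three then the extremes are simple. I split on eigenvalue multiplicities. \emph{Case A}: some Hermitian $A \in \mathcal{V}$ has exactly two eigenvalues, each of multiplicity $\geq 2$. Lemma~\ref{lemma6} directly yields (3). \emph{Case B.1}: some Hermitian $A \in \mathcal{V}$ has exactly two eigenvalues with one simple. Subtracting the multiple of $I$ corresponding to the non-simple eigenvalue and rescaling produces a rank-$1$ projection in $\mathcal{V}$, and Lemma~\ref{lemma5} then gives $\mathcal{V} \subseteq \mathcal{K}_{1,n-1}$, which is (4). \emph{Case B.2}: every non-scalar Hermitian has exactly three distinct eigenvalues (with simple extremes). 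This is where the real work lies.

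In Case B.2, pick a non-scalar Hermitian $A \in \mathcal{V}$ and a basis $e_1,\ldots,e_n$ diagonalizing $A$ with $e_1, e_n$ the simple extreme eigenvectors (eigenvalues $\lambda_1 > \lambda_n$) and $e_2,\ldots,e_{n-1}$ spanning the middle eigenspace (eigenvalue $\lambda_2$). For each $B \in \mathcal{V}$ I exploit strong triangle-freeness on two families of rank-$3$ projections. First (when $n \geq 4$), compressions to $\mathrm{span}(e_1, e_j, e_k)$ for middle $j,k$ reduce $A$ to a two-eigenvalue matrix with the larger eigenvalue simple, so Case B.1 reasoning applied inside the compressed $M_3$ (via Lemma~\ref{lemma5}) forces $b_{jj} = b_{kk}$ (call the common value $\beta_B$) and $b_{jk} = 0$ for all middle $j, k$. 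Second, compressions to $\mathrm{span}(e_1, v, e_n)$ for any unit vector $v$ in the middle eigenspace put $A$ in the three-eigenvalue situation of Lemma~\ref{lemma2}, yielding the magnitude identity $|\mu_n||\langle Bv, e_1\rangle|^2 = \mu_1|\langle Bv, e_n\rangle|^2$ (where $\mu_1 = \lambda_1 - \lambda_2$ and $\mu_n = \lambda_n - \lambda_2$), and the fact that the anticlique phase $z$ from Lemma~\ref{lemma1} must be independent of $B$ upgrades this to $\langle Bv, e_1\rangle = \lambda_0 \langle Bv, e_n\rangle$ for a single constant $\lambda_0$ with $|\lambda_0|^2 = \mu_1/|\mu_n|$. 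Deducing that this $\lambda_0$ is the same for all choices of $v$ (equivalently, that the pointwise ratios $b_{1j}/b_{nj}$ are independent of $j$) is the main technical obstacle; I expect to extract it by comparing the anticlique equations for $v = e_j$, $v = e_k$, and $v = e_j + e_k$ and using that $(b_{nj}(B))_j$ varies non-trivially as $B$ ranges over $\mathcal{V}$.

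Finally, set $v_\ast = (e_1 - \bar{\lambda}_0 e_n)/\sqrt{1 + |\lambda_0|^2}$ and $W = \mathrm{span}(e_2,\ldots,e_{n-1}, v_\ast)$. The phase relation just established immediately gives $\langle Bv_\ast, e_j\rangle = 0$ for every middle $j$ and every $B \in \mathcal{V}$. A short calculation with $t = (\lambda_2 - \lambda_n)/(\lambda_1 - \lambda_n)$ shows that $v_\ast$ coincides with the ``second'' basis vector of the $2$-anticlique produced by strong triangle-freeness inside $\mathrm{span}(e_1, e_j, e_n)$, so the anticlique condition automatically yields $\langle Bv_\ast, v_\ast\rangle = \langle Be_j, e_j\rangle = \beta_B$. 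Combined with the middle-block identity, this exhibits $W$ as an $(n-1)$-anticlique for $\mathcal{V}$, and expressing $\mathcal{V}$ in a basis whose last $n-1$ vectors span $W$ places it inside $\mathcal{K}_{1,n-1}$, giving~(4). When $n = 3$ the argument collapses: strong triangle-freeness applied to $I_3$ directly produces a $2$-anticlique $P$, and writing $\mathcal{V}$ in a basis whose last two vectors span $\mathrm{ran}(P)$ places $\mathcal{V}$ inside $\mathcal{K}_{1,2}$.
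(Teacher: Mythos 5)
Your ``if'' direction and the outer case analysis of the ``only if'' direction (reduce to $n\geq 4$, $\dim(\mathcal{V})\geq 3$; invoke Lemma \ref{lemma7}; split on whether some Hermitian element has two eigenvalues, handing off to Lemma \ref{lemma6} or Lemma \ref{lemma5}) match the paper exactly and are fine. The problem is Case B.2, which is where essentially all of the content of the theorem lives, and there your argument has a genuine, self-acknowledged gap. Fixing a single $A$ with eigenbasis $e_1,\dots,e_n$, the anticlique inside ${\rm span}(e_1,e_j,e_n)$ gives you, for each middle index $j$, a phase $z_j$ with $\sqrt{t}\,b_{1j}+\overline{z_j}\sqrt{1-t}\,b_{nj}=0$ for all $B\in\mathcal{V}$; but nothing yet forces the $z_j$ to agree across different $j$, and your entire endgame (the definition of $v_\ast$ and the verification that $W$ is an anticlique) presupposes a single $\lambda_0$. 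The proposed repair --- comparing the equations at $e_j$, $e_k$, and $e_j+e_k$ --- only yields $(r_j-r_{jk})b_{nj}=(r_{jk}-r_k)b_{nk}$ with all three ratios of equal modulus, and this pins down $r_j=r_k$ only if the map $B\mapsto(b_{nj},b_{nk})$ has two-dimensional image; you would separately have to handle the degenerate situations where that image is one-dimensional or where $b_{1j}=b_{nj}=0$ for all $B$ (in which case $\lambda_0$ at $v=e_j$ is simply undefined and $e_j$ must be absorbed into the anticlique by a different argument). As written, ``I expect to extract it'' is not a proof of the crux.

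For comparison, the paper avoids this consistency problem entirely by changing coordinates on the operator system rather than on $\mathbb{C}^n$: it forms the set $\mathcal{V}_0$ of Hermitian elements with middle eigenvalue $0$, proves via Cauchy interlacing that $\mathcal{V}_0$ compresses to zero on every $2$-anticlique and hence is a real linear space of rank-$2$ operators $v_1v_1^*-v_2v_2^*$, and then uses a rank argument (four independent range vectors would make $A+B$ have rank $4$) to trap the ranges of two noncommuting elements inside a common three-dimensional subspace. The distinguished vector $v$ then falls out of a single $2$-anticlique for that subspace, and the remaining work is propagating ``compresses to zero'' across $F=v^\perp$ using the uniqueness clause of Lemma \ref{lemma1}. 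If you want to salvage your more computational route, the interlacing observation (every $B\in\mathcal{V}_0$ satisfies $PBP=0$ for \emph{every} $2$-anticlique $P$ of $\mathcal{V}$) is the tool you are missing: it converts your per-$j$ phase conditions into genuine linear vanishing conditions and sidesteps the ratio-matching problem.
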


\begin{proof}
The condition $n = 2$ vacuously entails that $\mathcal{V}$ is strongly triangle-free. That the other conditions yield the same conclusion was shown in Proposition \ref{lowdim}, Example \ref{except}, and Proposition \ref{1ntf}.

Conversely, suppose $n \geq 3$ and ${\rm dim}(\mathcal{V}) \geq 3$. (If $n = 1$ then $\mathcal{V} = \mathcal{K}_{1,0}$, and if ${\rm dim}(\mathcal{V}) = 1$ then $\mathcal{V} = \mathbb{C}\cdot I_n \subseteq \mathcal{K}_{1,n-1}$.) We can assume $n \geq 4$ because if $n = 3$ the existence of a 2-anticlique immediately shows that, after a change of basis, we have $\mathcal{V} \subseteq \mathcal{K}_{1,2}$.

We know from Lemma \ref{lemma7} that every Hermitian matrix in $\mathcal{V}$ has at most three distinct eigenvalues. Consider first the case where some Hermitian matrix in $\mathcal{V}$ has exactly two eigenvalues. If both have multiplicity $\geq 2$ then we are done by Lemma \ref{lemma6}; if one has multiplicity $1$ then we can subtract a multiple of $I_n$ so that the eigenvalue with multiplicity $\geq 2$ is 0, and then infer that $\mathcal{V}$ contains a rank 1 projection. After a change of basis, we can conclude that $\mathcal{V} \subseteq \mathcal{K}_{1,n-1}$ by Lemma \ref{lemma5}.

Otherwise, every non-scalar Hermitian matrix in $\mathcal{V}$ has exactly three eigenvalues, only the middle of which has multiplicity $\geq 2$ (Lemma \ref{lemma7}). Let $\mathcal{V}_0$ be the set of Hermitian matrices in $\mathcal{V}$ whose middle eigenvalue is zero, together with the zero matrix. Every Hermitian matrix in $\mathcal{V}$ differs additively from something in $\mathcal{V}_0$ by a scalar multiple of the identity. I claim that $\mathcal{V}_0$ is a real linear space. To see this, let $A, B \in \mathcal{V}_0$ and let $P$ be a 2-anticlique for $\mathcal{V}$. Now if $A$ is nonzero, it has exactly one strictly positive eigenvalue and one strictly negative eigenvalue, both without multiplicity, so by the Cauchy interlacing theorem $PAP$ cannot have two strictly positive or two strictly negative eigenvalues. Thus since $PAP$ has only one eigenvalue ($P$ is an anticlique), that eigenvalue must be $0$. The same reasoning applies to $B$, so we conclude that $PAP = PBP = 0$. Thus $P(A+B)P = 0$. We have shown that $A + B$ compresses to zero on any 2-anticlique for $\mathcal{V}$. But $A+B$, if nonzero, has a middle eigenvalue with multiplicity; the projection onto the span of two eigenvectors for this middle eigenvalue plus one eigenvector for the top eigenvalue then dominates only one 2-anticlique for $A + B$ (Lemma \ref{lemma1}), so this must be an anticlique for $\mathcal{V}$ as well, and this shows that the middle eigenvalue of $A + B$ must be $0$. This proves the claim.

Let $A,B \in \mathcal{V}_0$ with $AB \neq BA$. Now $A$ has one strictly positive and one strictly negative eigenvalue, both without multiplicity, so we can write $A = v_1v_1^* - v_2v_2^*$ for some $v_1,v_2 \in \mathbb{C}$; similarly we can write $B = w_1w_1^* - w_2w_2^*$ for some $w_1, w_2 \in \mathbb{C}$. The four vectors $v_1$, $v_2$, $w_1$, $w_2$ cannot be linearly independent. If they were we could find $u \in \mathbb{C}$ with $u \perp v_1, v_2, w_1$ but $u \not\perp w_2$; then $(A + B)u = -\langle u, w_2\rangle w_2$, showing that $w_2 \in {\rm ran}(A + B)$, and similarly $v_1, v_2, w_1 \in {\rm ran}(A + B)$, contradicting the fact that everything nonzero in $\mathcal{V}_0$ has rank 2.

So there is a three-dimensional subspace $E$ of $\mathbb{C}^n$ which contains the ranges of both $A$ and $B$, and the projection $Q$ onto $E$ dominates a 2-anticlique for $\mathcal{V}$. Choose a basis $\{v, v', v''\}$ for $E$ so that the projection onto $E_0 = {\rm span}(v',v'')$ is an anticlique for $\mathcal{V}$. We can assume that ${\rm ran}(A) = {\rm span}(v,v')$. (We have $v \in {\rm ran}(A)$ since $A$ compresses to something in $\mathcal{K}_{1,2}$ with rank 2.) Let $F$ be the orthocomplement of $\mathbb{C}\cdot v$ in $\mathbb{C}^n$. To complete the proof, we will show that everything in $\mathcal{V}_0$ compresses to zero on $F$.

Multiplying $A$ by a nonzero scalar, we can assume that the matrix for $QAQ$ has the form $$\left[\begin{matrix}a&1&0\cr 1&&\cr 0&&\end{matrix}\right],$$ and subtracting a multiple of $A$ from $B$ and scaling, we can assume that the matrix for $QBQ$ has the form $$\left[\begin{matrix}b&0&1\cr 0&&\cr 1&&\end{matrix}\right].$$ (If the range of $B$ equalled the range of $A$ then we could subtract a multiple of $A$ from $B$ and scale to get the operator $vv^*$, but we are working in the case that every non-scalar Hermitian matrix in $\mathcal{V}$ has exactly three eigenvalues.)

Now we know that everything in $\mathcal{V}_0$ compresses to zero on $E_0$. Next, any two-dimensional subspace of $F \ominus E_0$ is contained in the zero eigenspace for $A$, so it is the range of the unique 2-anticlique for $A$ supported on the three-dimensional space spanned by it plus one eigenvector of $A$ belonging to a nonzero eigenvalue (Lemma \ref{lemma1}); thus everything in $\mathcal{V}_0$ compresses to zero on it. Finally, we can make the same argument (relative to $A$, since $v'' \in {\rm ker}(A)$) for the two-dimensional subspace spanned by $v''$ and any nonzero vector in $F \ominus E_0$, or (relative to $B$, since $v' \in {\rm ker}(B)$) for the two-dimensional subspace spanned by $v'$ and any nonzero vector in $F\ominus E_0$. All together, this shows that everything in $\mathcal{V}_0$ compresses to zero on $F$.
\end{proof}

\bibliographystyle{amsplain}

\end{document}